\documentclass{amsart}

\usepackage{amsmath}
\usepackage{amssymb}
\usepackage{mathtools}
\usepackage{braket}
\usepackage{prettyref}

\usepackage{subcaption}
\usepackage{tikz}

\usepackage[numbers]{natbib}
\usepackage{url}
\usepackage[hidelinks]{hyperref}

\theoremstyle{plain}
\newtheorem{theorem}{Theorem}[section]
\newtheorem{proposition}[theorem]{Proposition}
\newtheorem{lemma}[theorem]{Lemma}
\newtheorem{corollary}[theorem]{Corollary}
\newtheorem{problem}[theorem]{Problem}
\newtheorem{observation}[theorem]{Observation}

\newtheorem{fact}[theorem]{Fact}

\theoremstyle{definition}
\newtheorem{definition}[theorem]{Definition}
\newtheorem{example}[theorem]{Example}

\newrefformat{app}{Appendix \ref{#1}}
\newrefformat{cor}{Corollary \ref{#1}}
\newrefformat{exa}{Example \ref{#1}}
\newrefformat{fact}{Fact \ref{#1}}
\newrefformat{obs}{Observation \ref{#1}}
\newrefformat{prob}{Problem \ref{#1}}
\newrefformat{prop}{Proposition \ref{#1}}
\newrefformat{eq}{Equation \eqref{#1}}

\newcommand{\paren}[1]{\left(#1\right)}
\newcommand{\abs}[1]{\left\lvert#1\right\rvert}

\newcommand{\sqbra}[1]{\left[#1\right]}

\newcommand{\eqw}{\mathrel{=_{w}}}
\newcommand{\neqw}{\mathrel{\neq_{w}}}
\newcommand{\tow}{\mathrel{\rightarrow_{w}}}
\newcommand{\toli}{\mathrel{\rightarrow_{\mathrm{li}}}}
\newcommand{\tail}{\mathrel{\sim_{\mathrm{tail}}}}

\newcommand{\TrInv}[1]{\mathrm{Tr}_{#1}}
\newcommand{\ShiftTr}{\mathcal{S}}
\newcommand{\id}{\operatorname{id}}

\newcommand{\Nat}{\mathbb{N}}

\newcommand{\Terms}{\mathcal{T}}
\newcommand{\Alg}{\mathcal{A}}
\newcommand{\NTerms}{\mathcal{N}}
\newcommand{\Traces}{\paren{\Nat\times\Nat}^{\Nat}}
\newcommand{\QTraces}{\paren{\Nat\times\Nat}^{\Nat}/{\tail}}

\newcommand{\B}{\mathbf{B}}
\newcommand{\C}{\mathbf{C}}
\newcommand{\I}{\mathbf{I}}
\newcommand{\M}{\mathbf{M}}
\renewcommand{\S}{\mathbf{S}}
\newcommand{\W}{\mathbf{W}}
\newcommand{\Y}{\mathbf{Y}}

\newcommand{\BI}{\mathbf{BI}}
\newcommand{\BM}{\mathbf{BM}}
\newcommand{\BMI}{\mathbf{BMI}}
\newcommand{\BWI}{\mathbf{BWI}}

\title{Finite \texorpdfstring{$\BM$}{BM}--algebras and fixed-point combinators}
\author{Takuma Imamura}
\address{Independent Researcher, Kyoto, Japan.}
\email{imamura.takuma.66s@kyoto-u.jp}
\subjclass[2020]{03B40 (Primary), 03B70, 20M12, 20M35 (Secondary)}
\keywords{combinatory logic; fixed-point combinators; idempotent semigroups; idempotent left ideals.}

\begin{document}

\begin{abstract}
We study algebras for the $\mathbf{BM}$--, $\mathbf{BI}$--, and $\mathbf{BMI}$--fragments of combinatory logic, where $\mathbf{B} xyz \to_{w} x\left(yz\right)$, $\mathbf{M} x \to_{w} xx$ and $\mathbf{I} x \to_{w} x$.
We prove that every finite $\mathbf{BM}$--algebra $\mathcal{A}$ contains an element $Y$ satisfying $Yx = x\left(Yx\right)$ for all $x\in A$.
Moreover, a single closed $\mathbf{BM}$--term represents such an element in every finite $\mathbf{BM}$--algebra.
On the other hand, we show that there exists an infinite $\mathbf{BMI}$--algebra with no such element.
Consequently, the variety of $\mathbf{BM}$--algebras is not generated by finite $\mathbf{BM}$--algebras.
We also characterise finite $\mathbf{BI}$--algebras as finite semigroups with two distinguished left units $B$ and $I$, and finite $\mathbf{BMI}$--algebras as finite idempotent semigroups with three distinguished left units $B$, $M$ and $I$.
\end{abstract}

\maketitle

\section*{Introduction}

Consider the fragment of combinatory logic that consists of the following combinators:
\begin{align*}
\paren{\paren{\B x}y}z &\tow x\paren{yz}, \\
\M x &\tow xx, \\
\I x &\tow x.
\end{align*}
In his famous book on combinatory logic, Smullyan raised the question of whether there exists a fixed-point combinator in that system.

\begin{problem}[\citet{Smullyan_2000}]\label{prob:Smullyan-problem}
Is there a closed term $Y$ in the $\BMI$--fragment such that $Yx \eqw x \paren{Yx}$ for a fresh variable symbol $x$?
\end{problem}

To solve this problem, it is natural to consider its algebraic formulation.
The idea is that if one thoroughly understands the structure of a model satisfying the axioms of the $\BMI$--fragment (viewed as an equational theory), the original problem can be solved.

\begin{definition}
Let $\mathcal{A} = \paren{A, \ast}$ be a magma, i.e., a non-empty set $A$ with a binary operation $\ast$.
An element $\xi$ of $A$ is called
\begin{enumerate}
\item a \emph{$\B$--element} if $\paren{\paren{\xi x}y}z = x\paren{yz}$ for all $x,y,z\in A$.
\item an \emph{$\M$--element} if $\xi x = xx$ for all $x\in A$.
\item an \emph{$\I$--element} if $\xi x = x$ for all $x \in A$.
\item a \emph{$\Y$--element} if $\xi x = x\paren{\xi x}$ for all $x \in A$.
\end{enumerate}
A magma equipped with a $\B$--element and an $\M$--element is called a \emph{$\BM$--algebra}.
The terms `$\B$--algebra', `$\BI$--algebra' and `$\BMI$--algebra' are defined in a similar fashion.
\end{definition}

Then, the original problem is reduced to considering the following algebraic problem.

\begin{problem}[An algebraic version of \prettyref{prob:Smullyan-problem}]\label{prob:algebraic-Smullyan-problem}
Does every $\BMI$--algebra have a $\Y$--element?
\end{problem}

Recently, \prettyref{prob:Smullyan-problem} was solved negatively in \citep{Imamura_2026} \emph{without explicitly solving \prettyref{prob:algebraic-Smullyan-problem}}.
The proof involves the construction of an invariant $\TrInv{x}$ for terms that separates $Yx$ and $x\paren{Yx}$.

\begin{fact}[\citep{Imamura_2026}]\label{fact:no-periodic-point-combinator}
Let $x$ be a variable symbol and $r \geq 1$.
For every $x$--free term $Y$ in the $\BMI$--fragment,
\[
Yx \neqw x^{r}\paren{Yx},
\]
where $x^{r}{-}$ denotes the $r$--fold application of $x$, i.e., $x^{r} \paren{Yx} = x\paren{x\paren{\cdots x \paren{Yx} \cdots}}$.
\end{fact}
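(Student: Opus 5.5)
The plan is to separate $Yx$ and $x^{r}\paren{Yx}$ by an invariant of terms that is preserved by weak reduction. A finite model cannot work: by the main result announced in the abstract, every finite $\BM$--algebra already has a $\Y$--element. So the invariant has to live in an infinite structure. I would use the space of traces $\Traces$ modulo tail equivalence $\tail$, and assign to every term $t$ a class $\TrInv{x}(t)\in\QTraces$ that records how the variable $x$ sits along an infinite "spine" of $t$.

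The steps, in order:

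\textbf{(1) Define the trace.} Fix a deterministic, infinitary unfolding of a term, for instance the sequence of head positions visited by repeatedly reducing and then descending into arguments. At the $n$-th step, record a pair $(a_n,b_n)\in\Nat\times\Nat$, where $a_n$ counts the occurrences of $x$ passed on the left spine and $b_n$ counts the application depth. Where the unfolding is finite, pad the sequence with a default value. Then pass to the $\tail$-class.

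\textbf{(2) Show invariance under one-step reduction.} Check that $\TrInv{x}$ is unchanged by each of the rules $\B xyz\tow x\paren{yz}$, $\M x\tow xx$ and $\I x\tow x$, applied in an arbitrary context. By the Church--Rosser property of the $\BMI$--fragment, this makes $\TrInv{x}$ an invariant of $\eqw$. For $\B$ and $\I$ the check should amount to reindexing finitely many entries, which $\tail$ absorbs.

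\textbf{(3) Compute the effect of left application by $x$.} Prove that $\TrInv{x}(xs)=\ShiftTr\paren{\TrInv{x}(s)}$ for an explicit operator $\ShiftTr$ on $\QTraces$, which should be a shift that increments the $x$-counter. Consequently $\TrInv{x}\paren{x^{r}(Yx)}=\ShiftTr^{r}\paren{\TrInv{x}(Yx)}$.

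\textbf{(4) Rule out periodic points of $\ShiftTr$ among traces of $x$--free terms.} For $x$--free $Y$, the only occurrence of $x$ in $Yx$ is the argument. So any trace arising from $Yx$ has the $x$--counters growing according to a pattern fixed by the combinator structure of $Y$ alone. Show that $\ShiftTr^{r}$ changes this pattern on a cofinal set of indices for every $r\geq1$, for example by strictly increasing an eventually-defined asymptotic count. Then $\ShiftTr^{r}\paren{\TrInv{x}(Yx)}\neq\TrInv{x}(Yx)$, and therefore $Yx\neqw x^{r}(Yx)$.

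I expect the main obstacle to be the $\M$ case of step (2). Duplication $\M s\tow ss$ can copy arbitrarily many occurrences of $x$, so a naive counting invariant breaks. The trace must therefore be designed so that it follows only one copy, namely the head or left one, along the spine, and so that the other copy affects only finitely many recorded entries or none at all. If that fails, I would refine the pairs so that duplication only multiplies a component that $\tail$ ignores, for example by recording differences rather than absolute counts. This choice must still leave the shift in step (3) visible modulo $\tail$, and balancing these two requirements is the delicate point.
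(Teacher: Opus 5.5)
There is a genuine gap, and it lies in the definition of the invariant. As you describe it (head positions reached by reducing and then descending into arguments, with $a_n$ counting $x$'s on the left spine and $b_n$ the depth), your trace sees only the successive head normal forms, i.e.\ the B\"ohm tree. No such invariant can prove the statement.

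Take Statman's $x$--free term $T=\B\M\paren{\B\paren{\B\M}\B}$ of \prettyref{exa:ns-fpc}. We have $Tx\tow^{\ast}x^{1+2+\cdots+n}\paren{\M A_{n+1}}$ for every $n$, so $Tx$ and $x\paren{Tx}$ both unfold to $x\paren{x\paren{x\cdots}}$ and get the same (or tail-equivalent) traces. Yet the statement with $Y=T$ and $r=1$ asserts $Tx\neqw x\paren{Tx}$, so your step (4) is false for your invariant. Your plan for the $\M$ case (follow only the left copy and make the other copy irrelevant) pushes further in the wrong direction. What separates $Tx$ from $x\paren{Tx}$ is exactly the growth of the duplicated arguments.

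The trace in the paper (from \citep{Imamura_2026}) depends on the reduction history, not on the B\"ohm tree. At the $i$-th contraction $\Gamma_i\sqbra{\M A_i}\toli\Gamma_i\sqbra{A_iA_i}$ of the leftmost-innermost sequence, it records $p_i$, the number of occurrences of $x$ strictly before the redex, and $q_i=\abs{A_i}_x$, the $x$-content of the copied argument. For $T$ one gets $q_n=n+1$, strictly increasing. This forces $i=j$ in any tail equivalence and so excludes $\tau\tail\ShiftTr_1\tau$. Because the trace depends on a reduction strategy, its invariance under $\eqw$ is a substantial lemma, not a reindexing of finitely many entries.

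There is also a smaller problem. Padding finite unfoldings with a default value and passing to $\tail$-classes collapses all normalising terms into one class, so your step (3) fails there. The paper instead takes $\TrInv{x}\paren{u}$ to be the normal form itself, which makes the normalising case trivial, since $N\neq x^{r}N$.

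Step (4) is not an argument, and the heuristic you offer cannot work on its own. $\ShiftTr_r$ has fixed points in $\QTraces$, for example the class of $\paren{\paren{n,0}}_{n\in\Nat}$, because tail equivalence allows an index offset. So no growth-rate or ``asymptotic count'' consideration suffices by itself.

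With the correct trace, $\tau\tail\ShiftTr_r\tau$ means $p_{i+k}=p_{j+k}+r$ and $q_{i+k}=q_{j+k}$ for some $i,j$ and all $k$. Non-negativity of $p$ excludes $i\le j$. The case $i>j$, where $q$ is eventually periodic and $p$ increases by $r$ per period, must be ruled out by a real use of the hypothesis that $Y$ is $x$--free together with the structure of leftmost-innermost reduction. This is the content of \prettyref{fact:trace-separation}, which the paper imports from \citep[Theorem 3.1]{Imamura_2026}; your proposal offers nothing in its place.
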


In fact, as we will see, the proof of \prettyref{fact:no-periodic-point-combinator} also yields the negative answer to \prettyref{prob:algebraic-Smullyan-problem}.

\begin{theorem}[\prettyref{thm:infinite-bmi-wo-y}]\label{thm:intro-infinite-bmi-wo-y}
There exists an infinite $\BMI$--algebra $\mathcal{A}$ that has no $\Y$--element.
\end{theorem}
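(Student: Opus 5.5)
The plan is to build the algebra from the invariant behind \prettyref{fact:no-periodic-point-combinator}. Let $\Terms$ be the set of $\BMI$--terms over a countable set of variables, and let $\Terms/{\eqw}$ be the quotient by weak equality. This quotient is a $\BMI$--algebra, with the classes of $\B$, $\M$, $\I$ as the distinguished elements, because weak equality is a congruence for application. It is infinite: the variables are pairwise distinct normal forms, and by Church--Rosser they are pairwise $\eqw$--inequivalent.

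The free term algebra, however, cannot be used directly. A $\Y$--element of $\Terms/{\eqw}$ would be a class $[Y]$ with $Y x \eqw x(Yx)$ for all terms $x$. The fact only excludes $x$--free $Y$, whereas $Y$ might contain variables. I see two ways to handle this.

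The first option is to pass to the closed-term algebra, namely closed $\BMI$--terms modulo $\eqw$, which is again a $\BMI$--algebra. Here the requirement is $Yt \eqw t(Yt)$ only for closed $t$, so \prettyref{fact:no-periodic-point-combinator} does not apply verbatim, since it concerns a fresh variable $x$. I would then need an instance: a specific closed $t$ whose invariant $\TrInv{t}$ behaves like that of a variable. This seems delicate, so I prefer the second option.

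The second option is to use the invariant directly. Working in $\Terms/{\eqw}$ with variables allowed, suppose $[Y]$ is a $\Y$--element. Choose a variable $x$ not occurring in $Y$; this is possible because $Y$ is a finite term and there are infinitely many variables. Then $Yx \eqw x(Yx)$. Now $Y$ is $x$--free but may contain other variables, while the fact is stated for $x$--free terms of the $\BMI$--fragment. I expect the proof in \citep{Imamura_2026} to treat other variables as inert constants, so that it applies equally to $x$--free terms with free variables. I would check this against how $\TrInv{x}$ is defined: it tracks occurrences of $x$ (traces, i.e.\ elements of $\QTraces$), and the other symbols only contribute structure. The case $r=1$ then gives $Yx \neqw x(Yx)$, a contradiction. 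Hence $\Terms/{\eqw}$ is an infinite $\BMI$--algebra with no $\Y$--element.

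The main obstacle is exactly this generalisation. If the fact really needs $Y$ to be closed, I would fix it by substitution. Weak reduction is stable under substituting $\I$ for every variable other than $x$, so from $Yx \eqw x(Yx)$ we get $Y'x \eqw x(Y'x)$, where $Y' = Y[\I/\text{other variables}]$ is closed and $x$--free. This contradicts \prettyref{fact:no-periodic-point-combinator} directly. Because substitution settles the obstacle cleanly, the final proof will use it: take the term model on countably many variables, pick $x$ fresh for $Y$, substitute $\I$ for the remaining variables, and apply the fact with $r=1$.
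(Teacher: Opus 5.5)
Your argument is correct, and it takes a genuinely different route from the paper's. You use the term model $\Terms_{\BMI}/{\eqw}$ over infinitely many variables and treat \prettyref{fact:no-periodic-point-combinator} as a black box. A $\Y$--element $\sqbra{Y}$ gives $Yx \eqw x\paren{Yx}$ for a variable $x$ fresh for $Y$. Substituting $\I$ for the remaining variables, which weak equality tolerates, then produces a closed $x$--free counterexample.

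The paper instead works in the trace algebra $\Alg_{\BMI} = \NTerms_{\BMI} \sqcup \QTraces$, the codomain of $\TrInv{x}$, where a candidate $Y$ need not be a term at all. It first tests $Y$ against $\omega = \TrInv{x}\paren{\M\M}$. If the equation holds at $a=\omega$, there are two cases:
\begin{enumerate}
\item $Y \in \NTerms_{\BMI}$ with $\abs{Y}_{x} = 0$, since $\omega \neq \ShiftTr_{c}\omega$ for $c>0$. Then \prettyref{fact:trace-separation} rules out $a = x$; this is a statement about the invariant itself, stronger than $\neqw$.
\item $Y = \omega$. Then $a=x$ fails, because $x^{r}\star_{x}\omega = \ShiftTr_{r}\omega \neq \omega$.
\end{enumerate}

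What your route buys:
\begin{itemize}
\item It is more elementary.
\item It shows that the negative answer to \prettyref{prob:algebraic-Smullyan-problem} follows from the \emph{statement} of \prettyref{fact:no-periodic-point-combinator}, not only from its proof.
\item Taking $\kappa$ variables gives examples of every infinite cardinality $\kappa$ directly, without L\"owenheim--Skolem.
\end{itemize}

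What the paper's route buys is a single concrete model in which the failure is uniform. For every $Y$ and every $r \geq 1$, the witness can be taken from the fixed set $\set{x, \omega}$. In your term model, by contrast, the witness must be a variable fresh for $Y$, so it depends on $Y$.
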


Prior to the complete solution of \prettyref{prob:Smullyan-problem}, the author attempted to (negatively) solve the problem by constructing a finite $\BMI$--algebra that lacks a $\Y$--element; however, this attempt was unsuccessful.
For instance, we obtained the following results using a SAT solver.

\begin{observation}\label{obs:sat-observation}
Every $\BM$--algebra of cardinality $\leq 12$ has a $\Y$--element.
(See also \prettyref{app:code-availability}.)
\end{observation}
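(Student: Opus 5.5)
The plan is to treat the statement as a finite search problem, since it asserts nothing beyond cardinality $\leq 12$, and to discharge it with a SAT solver. For each $n \leq 12$ I take the carrier $A = \{0,\dots,n-1\}$ and encode an arbitrary multiplication table with Boolean variables $p_{a,b,c}$ meaning $a b = c$. Exactly-one constraints over $c$ make these variables define a total binary operation. By relabelling I may fix the $\B$--element to be $0$. For the $\M$--element there are two cases: either it is also $0$, or it is $1$ (possible only when $n \geq 2$). I run each case as a separate instance; this both breaks symmetry and keeps the encoding uniform.

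The axioms are imposed as clauses. The $\M$--axiom is $\bigwedge_{x} p_{m,x,xx}$, where the value $xx$ is itself a table entry, so I encode it as $p_{x,x,c} \to p_{m,x,c}$ for all $x,c$. For the $\B$--axiom I introduce auxiliary one-hot variables for the intermediate values $u = 0x$, $v = u y$ and $w = y z$. I then require $v z = x w$ through clauses of the form
\[
p_{0,x,u} \wedge p_{u,y,v} \wedge p_{y,z,w} \wedge p_{v,z,d} \to p_{x,w,d},
\]
ranging over all indices. The negation of the conclusion is ``no element is a $\Y$--element''. For each candidate $y$ I add one clause saying that some $x$ witnesses failure. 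The witness itself is encoded with fresh variables $q_{y,x}$ and clauses forcing, whenever $q_{y,x}$ holds, that $y x = e$ and $x e \neq e$. The observation is then exactly the unsatisfiability of every such instance for $n \leq 12$.

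The hard step is purely computational. The naive $\B$--clauses number about $n^6$, which is roughly $3\cdot 10^6$ at $n = 12$, so they need care. I would first try to cut this down in two ways: add lex-leader symmetry breaking on the remaining $n-2$ free labels, and split the $\B$--constraint through the auxiliary variables so that each clause mentions only three table entries. If the solver still stalls at $n = 12$, I would additionally case-split on the value of $0 0$.

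As a sanity check independent of the solver, the observation should also follow from the paper's later theorem, that every finite $\BM$--algebra has a $\Y$--element. The solver results then serve as evidence that motivated that theorem rather than as its proof. To increase confidence, I would also certify the UNSAT answers with DRAT proofs.
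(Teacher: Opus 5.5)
Your proposal is correct and takes the paper's route: the paper establishes the observation by a SAT-solver search over multiplication tables, using its \texttt{magma-search} tool with MallobSat. The paper also remarks, as you do, that the observation is a special case of Corollary~\ref{cor:finite-bm-has-y}, which is proved without any computation. One small slip: your $\B$--clause has seven free indices $x,y,z,u,v,w,d$, so the naive count is $n^{7}\approx 3.6\cdot 10^{7}$ at $n=12$ rather than $n^{6}$; this only strengthens the case for splitting it through auxiliary variables.
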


Furthermore, during an exhaustive search for $\BM$-- and $\BMI$--algebras, the author observed that the vast majority of them have somewhat `simple' structures, which in turn force the existence of a $\Y$--element.
This naturally gives rise to the following problem.

\begin{problem}[A finite analogue of \prettyref{prob:algebraic-Smullyan-problem}]\label{prob:finite-algebraic-Smullyan-problem}
Does every finite $\BM$--algebra have a $\Y$--element?
\end{problem}

In this paper, we provide an \emph{affirmative} answer to this problem: every finite $\BM$--algebra has a $\Y$--element.
This explains why an infinite algebraic system was required to resolve \prettyref{prob:algebraic-Smullyan-problem}.

The paper is organised as follows.
In \prettyref{sec:trace-invariant-revisited}, we first recall the definitions of the infinite $\BMI$--algebra $\Alg_{BMI}$ and the trace invariant $\TrInv{x}$, which were introduced in \citep{Imamura_2026} to prove \prettyref{fact:no-periodic-point-combinator}.
We then prove \prettyref{thm:intro-infinite-bmi-wo-y} by showing that $\Alg_{BMI}$ has no $\Y$--element.

In \prettyref{sec:finite-bmi-algebras}, we characterise finite $\BMI$--algebras as finite idempotent semigroups whose distinguished elements $B$, $M$, and $I$ are left units.
As a consequence, every finite $\BMI$--algebra has a $\Y$--element.

In \prettyref{sec:finite-bm-algebras}, we investigate finite
$\BM$--algebras.
We first construct a finite $\BI$--algebra as a quotient of an idempotent left ideal of a finite $\B$--algebra.
We use this construction to show that, in finite $\BM$--algebras, the eventual image of each left translation coincides with the set of its fixed points.
We then prove that every non-standard fixed-point combinator gives a $\Y$--element in every finite $\BM$--algebra, thereby answering \prettyref{prob:finite-algebraic-Smullyan-problem}.
Together with the infinite example in \prettyref{sec:trace-invariant-revisited}, this shows that the variety of $\BM$--algebras is not generated by finite $\BM$--algebras.

In \prettyref{sec:open-problems}, we consider several related open problems concerning the relationship between the permutative effects in the sense of \citet{MW_1991} and the existence of fixed-point combinators, as well as the existence of Turing-type fixed-point combinators in the $\BWI$--fragment.

\section{Preliminaries}

We refer to \citet{CHS_1972, Barendregt_2012} for general references of combinatory logic and lambda calculus.

Throughout this paper, we assume that application is left-associative, and we omit parentheses where appropriate: $uvw := \paren{\paren{uv}w}$.
Similarly, we assume that the binary operation of a magma is left-associative: $u\ast v\ast w := \paren{u \ast v} \ast w$.
The operator symbol $\ast$ is omitted unless there is a risk of confusion.

For terms $t$ and $u$ and an $n\in\Nat$, we define the term $t^{n} u$ as follows:
\begin{align*}
t^{0} u &= u, \\
t^{n+1} u &= t\paren{t^{n}u}.
\end{align*}
Similarly, for elements $a$ and $b$ of a magma and an $n\in\Nat$, define the element $a^n b \in A$ as follows:
\begin{align*}
a^{0} b &= b, \\
a^{n+1} b &= a\paren{a^{n}b} \paren{= a\ast\paren{a^{n}b}}.
\end{align*}

We denote the set of all terms in the $\BMI$--fragment by $\Terms_{\BMI}$, the set of all normal terms by $\NTerms_{\BMI}$.
The one-step weak reduction, its reflexive transitive closure, and its equivalence closure are denoted by $\tow$, $\tow^{\ast}$, and $\eqw$, respectively.
The one-step leftmost-innermost reduction is denoted by $\toli$.

The set $\Terms_{\BMI}$ forms the free magma generated by the set of all variable and constant symbols, whose magma operation is given by application.
The quotient algebra $\Terms_{\BMI} / {\eqw}$ forms a free $\BMI$--algebra generated by the set of all variable symbols.
The same applies to the $\BM$--fragment.

Let $\mathcal{A} = \paren{A, \ast, B, M}$ be a $\BM$--algebra.
For a closed term $t$ that may contain elements of $A$ as constant symbols, we denote the interpretation of $t$ in $\mathcal{A}$ by $t^{\mathcal{A}}$.

\section{An infinite \texorpdfstring{$\BMI$}{BMI}--algebra without \texorpdfstring{$\Y$}{Y}}\label{sec:trace-invariant-revisited}

In this section, we show that the $\BMI$--algebra $\Alg_{\BMI}$, introduced in \citep{Imamura_2026}, provides an example of an infinite $\BMI$--algebra that lacks a $\Y$--element.

\subsection{Definition of the \texorpdfstring{$\BMI$}{BMI}--algebra \texorpdfstring{$\Alg_{\BMI}$}{A\_BMI}}

Throughout this section, we fix a variable symbol $x$.
We first recall the definition of the $\BMI$--algebra $\Alg_{BMI}$ and the homomorphism $\TrInv{x}\colon \Terms_{\BMI} \to \Alg_{\BMI}$.

\begin{definition}[Tail equivalence]
For $\alpha,\beta \in \Traces$, we say that $\alpha$ and $\beta$ are \emph{tail equivalent} (denoted by $\alpha\tail\beta$) if there exist $i,j\in\Nat$ such that $\alpha_{i+k} = \beta_{j+k}$ holds for all $k \in \Nat$.
\end{definition}

\begin{definition}[Shift operators]
Let $c\in\Nat$.
Define the map $S_{c}\colon \Traces \to \Traces$ by
\[
\ShiftTr_{c} \paren{\paren{p_{i}, q_{i}}}_{i\in\Nat} := \paren{\paren{p_{i} + c, q_{i}}}_{i\in\Nat}.
\]
Since $S_{c}$ preserves $\tail$, it induces the map
\[
\ShiftTr_{c}\colon \QTraces \to \QTraces,\qquad \sqbra{\alpha} \mapsto \sqbra{S_{c}\alpha}.
\]
\end{definition}

\begin{definition}[Traces]
Let $u \in \Terms_{\BMI}$ and suppose that $u$ does not have a weak normal form.
Then there exists a (unique) infinite leftmost-innermost reduction sequence $u=u_{0}\toli u_{1} \toli \cdots$ (see \citep[Lemma 1.1]{Imamura_2026}).
For $i \in \Nat$, let
\[
\Gamma_{i}\sqbra{\M A_{i}} \toli \Gamma_{i}\sqbra{A_{i}A_{i}}
\]
be the $i$--th contraction of $M$ in the sequence.
Define the \emph{$x$--trace} $\tau_{x}\paren{u}$ of $u$ by
\[
\tau_{x}\paren{u} := \paren{\paren{p_{i}, q_{i}}}_{i\in\Nat} \in \Traces,
\]
where
\begin{align*}
    p_{i} &:= \text{the number of occurrences of $x$ in $\Gamma_{i}\sqbra{\M A_{i}}$ strictly before $\M A_{i}$}, \\
    q_{i} &:= \abs{A_{i}}_{x}.
\end{align*}
\end{definition}

\begin{definition}[Trace invariants]
For a term $u \in \Terms_{\BMI}$, the \emph{$x$--trace invariant} $\TrInv{x}\paren{u}$ is defined by
\[
\TrInv{x}\paren{u} :=
\begin{cases}
    N \in \NTerms_{\BMI}, & u \text{ has a normal form }N, \\
    \sqbra{\tau_{x}\paren{u}} \in \QTraces, & \text{otherwise}.
\end{cases}
\]
\end{definition}

\begin{definition}[Algebra of traces]
Let
\[
\Alg_{\BMI} := \NTerms_{\BMI} \sqcup \paren{\QTraces}.
\]
Define the binary operation
\[
\star_{x}\colon \Alg_{\BMI} \times \Alg_{\BMI} \to \Alg_{\BMI}
\]
by
\[
a \ast_{x} b := \begin{cases}
    a, & a \in \QTraces, \\
    \ShiftTr_{\abs{a}_{x}} b, & a \in \NTerms_{\BMI},\ b\in \QTraces, \\
    \TrInv{x}\paren{ab}, & a,b\in\NTerms_{\BMI}.
\end{cases}
\]
\end{definition}

\begin{fact}[Theorem 2.10 in \citep{Imamura_2026}]
The structure $\paren{\Alg_{\BMI}, \star_{x}, \B, \M, \I}$ forms a $\BMI$--algebra.
\end{fact}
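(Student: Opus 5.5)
The plan is to verify the three defining equations of $\B$, $\M$ and $\I$ for $\star_{x}$ by a case split. Each argument is either a normal term or a trace class. The case where all arguments are normal terms is reduced to term rewriting via the map $\TrInv{x}$. The cases involving trace classes are computed directly from the definition of $\star_{x}$.

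\emph{Step 1 (invariance).} I would first show that $u \tow v$ implies $\TrInv{x}\paren{u} = \TrInv{x}\paren{v}$, and hence that $\TrInv{x}$ is constant on $\eqw$--classes. If $u$ has a normal form, then so does $v$, and by confluence it is the same one. Otherwise both leftmost-innermost sequences are infinite. One must then show that the $\M$--contractions along the two sequences eventually coincide, with equal numbers of occurrences of $x$ before the redex and inside its argument. The idea is to trace the residual of the contracted redex $u \tow v$ along the leftmost-innermost sequence of $u$. This redex is either eventually contracted by the innermost strategy, after which the two sequences merge up to a finite offset, or it lies in a part that is never touched. In the latter case it sits to the right of the active position, so it contributes no $x$'s to $p_{i}$ and does not occur inside $A_{i}$. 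In both cases the traces differ only by a finite prefix, so they are tail equivalent. I expect this step to be the main obstacle: one has to control duplication of the redex by $\M$ and its erasure or rearrangement by $\B$ and $\I$ when counting occurrences of $x$.

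\emph{Step 2 (homomorphism).} Next I would show that $\TrInv{x}\paren{uv} = \TrInv{x}\paren{u} \star_{x} \TrInv{x}\paren{v}$. By Step 1 we may replace $u$ and $v$ by their normal forms whenever these exist.
\begin{itemize}
\item If $u$ has no normal form, the leftmost-innermost reduction of $uv$ runs forever inside $u$, with nothing to its left. Its trace is therefore that of $u$, matching $a \star_{x} b = a$.
\item If $u$ is normal with normal form $N$ and $v$ has no normal form, then every redex of $Nv$ lies inside $v$. The leftmost-innermost strategy reduces inside $v$ forever, with the $\abs{N}_{x}$ occurrences of $x$ in $N$ lying strictly to the left. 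This gives the trace $\ShiftTr_{\abs{N}_{x}}$ applied to the trace of $v$.
\item If both are normal, the identity holds by definition.
\end{itemize}

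\emph{Step 3 (axioms).} When all arguments are normal terms, each one equals its own $\TrInv{x}$. The required equation then follows from Steps 1 and 2, because $\B abc \eqw a\paren{bc}$, $\M a \eqw aa$ and $\I a \eqw a$ hold as terms. The remaining cases are computed directly, using that $\B$, $\M$ and $\I$ are normal with no $x$, that $\B a$ and $\B ab$ are normal for normal $a,b$ with $\abs{\B ab}_{x} = \abs{a}_{x} + \abs{b}_{x}$, and that $\ShiftTr_{c}\ShiftTr_{d} = \ShiftTr_{c+d}$ and $\ShiftTr_{0} = \id$.
\begin{itemize}
\item If $a$ is a trace, then $\B a = a$ and so $\B abc = a = a\paren{bc}$.
\item If $a$ is normal and $b$ is a trace, then $\B abc = \ShiftTr_{\abs{a}_{x}}b$, and also $a\paren{bc} = a b = \ShiftTr_{\abs{a}_{x}} b$.
\item If $a,b$ are normal and $c$ is a trace, then both sides equal $\ShiftTr_{\abs{a}_{x}+\abs{b}_{x}} c$.
\item For $\M$ and a trace $a$, we get $\M a = a = aa$.
\item For $\I$ and a trace $a$, we get $\I a = a$.
\end{itemize}
This completes the verification that $\paren{\Alg_{\BMI}, \star_{x}, \B, \M, \I}$ is a $\BMI$--algebra.
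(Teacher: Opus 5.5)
The overall plan is sound, and your trace-case computations in Step 3 are correct. The gap is Step 1. Both later steps rely on it: Step 2 uses it to replace $u$, $v$ by their normal forms, and Step 3 uses it for the all-normal case. You leave it as a sketch, and its case split is incomplete.

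The residual of the contracted redex $R$ need not be either eventually contracted or untouched. The leftmost-innermost sequence can enter $R$ and stay inside it forever, e.g.\ $R=u=\M\paren{\M\M}$, or $R=\B abc$ with $a$ normal and $b$ without normal form. Then $R$ is not to the right of the active position, so your reason why $p_{i}$ and $q_{i}$ are unaffected does not apply. The conclusion survives, because the contractum keeps the arguments in the same left-to-right order and the first copy produced by $\M$ sits at the same offset, but this needs its own argument.

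The claim that after $R$ is contracted the sequences ``merge up to a finite offset'' is also unargued, and the obstacle is not the one you name. Under leftmost-innermost reduction $R$ keeps exactly one residual until it is contracted, because the contracted redex never properly contains another redex; and none of $\B$, $\M$, $\I$ erases anything. The real difficulty runs the other way. If $R=\M a$ contains the active redex, then in $v$ the pending reduction of $a$ has been duplicated, so the sequence of $v$ does not track that of $u$ step by step. One must check that it normalises the left copy exactly as $u$ normalises $a$, then the right copy, and so reaches $u_{k+1}=C\sqbra{a'a'}$; the checks for $\B$ and $\I$ are similar and easier.

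A smaller slip in Step 2: ``every redex of $Nv$ lies inside $v$'' is false when $N$ is $\M$, $\I$ or $\B pq$, since then $Nv$ is itself a redex. What holds, and suffices, is that every innermost redex of $Nv$ lies in $v$.

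You do not need Step 1 at all. This paper only cites the fact. In the source, the homomorphism property is Theorem 2.9 and invariance under $\eqw$ comes later as Theorem 2.11, which suggests the source also avoids full invariance here.

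For normal $a,b,c$, the only redex in $\B abc$ (resp.\ $\M a$, $\I a$) is the whole term. So its leftmost-innermost sequence is one head step followed by that of $a\paren{bc}$ (resp.\ $aa$, $a$). Hence $\TrInv{x}$ agrees on each pair: either the same normal form, or traces differing by a prefix of length at most one. The following hold directly by definition:
\[
\B\star_{x}a\star_{x}b\star_{x}c=\TrInv{x}\paren{\B abc},\quad \M\star_{x}a=\TrInv{x}\paren{\M a},\quad \I\star_{x}a=\TrInv{x}\paren{\I a},\quad a\star_{x}a=\TrInv{x}\paren{aa}.
\]
The only further input is $a\star_{x}\TrInv{x}\paren{bc}=\TrInv{x}\paren{a\paren{bc}}$, which is an instance of Step 2.

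In Step 2, replace the appeal to Step 1 by the fact that a normalising $\BMI$--term has a terminating leftmost-innermost sequence. This holds because the rules are orthogonal and non-erasing, so weak normalisation implies strong normalisation. The sequence of $uv$ then passes through $Nv$, and through $NN'$ if $v$ has normal form $N'$, so only a finite prefix is added to the trace. With this change, your Step 2 case analysis and your Step 3 computations give a complete proof.
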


\begin{fact}[Theorem 2.9 in \citep{Imamura_2026}]
The map $\TrInv{x}\colon \Terms_{\BMI} \to \Alg_{\BMI}$ is a homomorphism, i.e., $\TrInv{x}\paren{uv} = \TrInv{x}\paren{u} \star_{x} \TrInv{x}\paren{v}$ for all $u,v\in\Terms_{\BMI}$.
\end{fact}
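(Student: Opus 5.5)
The plan is a case analysis on whether $u$ and $v$ have weak normal forms. In each case I will read off $\TrInv{x}\paren{uv}$ from the leftmost-innermost reduction sequence of $uv$. Throughout I take from \citep[Lemma 1.1]{Imamura_2026} that a term has no weak normal form iff its leftmost-innermost reduction sequence is infinite. Consequently, whenever $u$ has a normal form $N$, leftmost-innermost reduction of $u$ actually reaches $N$, and confluence makes $N$ unique.

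The first step is a structural observation about leftmost-innermost reduction of an application $uv$. Every redex of $uv$ lies inside $u$, lies inside $v$, or is $uv$ itself. If $uv$ is a redex, then $u$ is one of $\M$, $\I$, $\B w_{1}$ or $\B w_{1}w_{2}$. In that case any redex inside $u$ is a proper subterm of the redex $uv$, so $uv$ is not innermost. Hence the leftmost-innermost step of $uv$ contracts a redex of $u$ whenever $u$ is not normal, and it is the leftmost-innermost step of $u$ itself. If $u$ is normal, the step contracts a redex of $v$ when $v$ is not normal, again the leftmost-innermost one. Only when both are normal can the head redex $uv$ be contracted. So the sequence for $uv$ is: the sequence for $u$ in context $\sqbra{\,}v$, then the sequence for $v$ in context $N\sqbra{\,}$, then the sequence for $NN'$.

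The cases then go as follows.

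\emph{Case 1: $u$ has no normal form.} The whole infinite sequence for $uv$ is that of $u$ placed in context $\sqbra{\,}v$. Every $\M$--contraction happens inside $u$, and $v$ sits to the right of it. So the counts $p_{i}$ of $x$'s strictly before the redex are unchanged, and the $q_{i}$ are unchanged as well. Thus $\tau_{x}\paren{uv}=\tau_{x}\paren{u}$, and $\TrInv{x}\paren{uv}=\TrInv{x}\paren{u}=\TrInv{x}\paren{u}\star_{x}\TrInv{x}\paren{v}$ by the first clause of $\star_{x}$.

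\emph{Case 2: $u$ has normal form $N$ and $v$ has none.} After finitely many steps $uv$ becomes $Nv$, and from then on the sequence is that of $v$ in context $N\sqbra{\,}$. Each later $\M$--contraction has exactly $\abs{N}_{x}$ extra occurrences of $x$ before it, so its $p_{i}$ is shifted by $\abs{N}_{x}$, while $q_{i}$ is unchanged. The finitely many $\M$--contractions made while normalising $u$ form a finite prefix, which tail equivalence discards. Hence $\sqbra{\tau_{x}\paren{uv}}=\ShiftTr_{\abs{N}_{x}}\sqbra{\tau_{x}\paren{v}}$, which is the second clause of $\star_{x}$.

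\emph{Case 3: $u$ and $v$ have normal forms $N$ and $N'$.} Then $uv\eqw NN'$. Whether or not $NN'$ is normalising, $uv$ and $NN'$ have the same normal form in the first alternative. In the second, the trace of $uv$ differs from that of $NN'$ only by a finite prefix, so the two tail classes coincide. Hence $\TrInv{x}\paren{uv}=\TrInv{x}\paren{NN'}=N\star_{x}N'$ by the third clause.

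I expect the main obstacle to be the structural step: checking carefully that leftmost-innermost reduction of $uv$ never contracts a redex involving $v$ while $u$ is still reducible. The same care is needed to see that "strictly before" counts behave exactly as claimed under the contexts $\sqbra{\,}v$ and $N\sqbra{\,}$. Relying on Lemma 1.1 of \citep{Imamura_2026} for the fact that a normal form, when it exists, is reached by leftmost-innermost reduction is the other point needing care.
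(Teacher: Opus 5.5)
Your proof is correct. There is no proof in this paper to compare it with, because the paper only imports this fact from the earlier work; your three cases match the three clauses of $\star_{x}$ exactly, which is the natural argument. The ingredient doing the real work is the direction of your `Lemma 1.1' saying that a normalising term is normalised by $\toli$, equivalently that an infinite $\toli$--sequence rules out a normal form. You use it in every case: to conclude that $uv$ has no normal form in Cases 1 and 2, and to get $u$ and $v$ actually reduced to $N$ and $N'$ in Cases 2 and 3. The present paper only quotes Lemma 1.1 for the easy converse. The direction you need does hold, because the $\BMI$ rules are orthogonal and non-erasing, so every weakly normalising term is strongly normalising (Church's theorem as extended by O'Donnell).
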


\begin{fact}[Theorem 2.11 in \citep{Imamura_2026}]
The homomorphism $\TrInv{x}\colon \Terms_{\BMI} \to \Alg_{\BMI}$ is invariant under weak equivalence $\eqw$: for any $u,v\in\Terms_{\BMI}$, if $u\eqw v$, then $\TrInv{x}\paren{u} = \TrInv{x}\paren{v}$.
\end{fact}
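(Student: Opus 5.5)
The plan is to derive this directly from the two preceding facts, Theorem 2.10 and Theorem 2.9 of \citep{Imamura_2026}: $\Alg_{\BMI}$ is a $\BMI$--algebra, and $\TrInv{x}$ is a magma homomorphism from the term magma $\Terms_{\BMI}$. Since $\eqw$ is the equivalence closure of $\tow$, and the kernel of any map is an equivalence relation, it suffices to prove the following: if $u \tow v$ in one step, then $\TrInv{x}\paren{u} = \TrInv{x}\paren{v}$.

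First I check that $\TrInv{x}$ sends the constants to the distinguished elements. The terms $\B$, $\M$, $\I$ are normal, so $\TrInv{x}\paren{\B} = \B$, $\TrInv{x}\paren{\M}=\M$ and $\TrInv{x}\paren{\I}=\I$. These are exactly the $\B$--, $\M$-- and $\I$--elements with respect to which Theorem 2.10 asserts that $\paren{\Alg_{\BMI},\star_{x},\B,\M,\I}$ is a $\BMI$--algebra.

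Next I treat a redex contracted at the root. For $\B abc \tow a\paren{bc}$, write $\alpha=\TrInv{x}\paren{a}$, $\beta=\TrInv{x}\paren{b}$, $\gamma=\TrInv{x}\paren{c}$. The homomorphism property gives
\[
\TrInv{x}\paren{\B abc} = \B\star_{x}\alpha\star_{x}\beta\star_{x}\gamma = \alpha\star_{x}\paren{\beta\star_{x}\gamma} = \TrInv{x}\paren{a\paren{bc}},
\]
where the middle equality holds because $\B$ is a $\B$--element of $\Alg_{\BMI}$. The same computation works for $\M a\tow aa$ and $\I a\tow a$, using the $\M$-- and $\I$--element properties.

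Finally I pass to redexes inside a context $\Gamma\sqbra{-}$, by induction on $\Gamma$. If $\Gamma = \Gamma'\, w$ or $\Gamma = w\,\Gamma'$, then the homomorphism property gives $\TrInv{x}\paren{\Gamma\sqbra{r}} = \TrInv{x}\paren{\Gamma'\sqbra{r}}\star_{x}\TrInv{x}\paren{w}$, or the symmetric expression. So if $\TrInv{x}\paren{r}=\TrInv{x}\paren{r'}$, then $\TrInv{x}\paren{\Gamma\sqbra{r}} = \TrInv{x}\paren{\Gamma\sqbra{r'}}$.

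I expect no real obstacle at this level. All the difficulty is absorbed into the cited facts, in particular that $\TrInv{x}$ is a homomorphism even when $uv$ has no normal form while $u,v$ do, which requires comparing the $\M$--traces of leftmost-innermost reductions. The one point to verify carefully is that the interpretations of $\B,\M,\I$ in $\Alg_{\BMI}$ coincide with $\TrInv{x}$ of the constant symbols, which holds because these are normal terms.
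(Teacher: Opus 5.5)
Your proof is correct. The paper gives no argument of its own here; it only cites Theorem~2.11 of \citep{Imamura_2026}. Your derivation is exactly the factorisation of $\TrInv{x}$ through the free $\BMI$--algebra $\Terms_{\BMI}/{\eqw}$ that the paper invokes right after stating the fact: the kernel of the homomorphism $\TrInv{x}$ is a congruence on $\Terms_{\BMI}$, and it contains the three root contractions because the constants are sent to the $\B$--, $\M$-- and $\I$--elements of $\Alg_{\BMI}$.
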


In other words, the trace invariant can be considered as a homomorphism from $\Terms_{\BMI} / {\eqw}$ to $\Alg_{\BMI}$.
In general, an invariant for the $\BMI$--fragment is precisely a pair of a $\BMI$--algebra $\mathcal{A}$ and a homomorphism from $\Alg_{\BMI} / {\eqw}$ to $\mathcal{A}$.

The proof of \prettyref{fact:no-periodic-point-combinator} (\citep[Theorem 3.1]{Imamura_2026}) actually proves the following stronger statement:

\begin{fact}[Proof of Theorem 3.1 in \citep{Imamura_2026}]\label{fact:trace-separation}
Let $r \geq 1$.
For every $x$--free term $Y \in \Terms_{\BMI}$,
\[
\TrInv{x}\paren{Yx} \neq \TrInv{x}\paren{x^r \paren{Yx}}.
\]
\end{fact}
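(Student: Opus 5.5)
The plan is to compute both sides with the homomorphism property of $\TrInv{x}$ and then compare them in $\Alg_{\BMI}$. Since $x$ is a normal term, $\TrInv{x}\paren{x} = x$, and $\TrInv{x}\paren{x^{r}\paren{Yx}} = x \star_{x} \paren{x \star_{x} \cdots \paren{x \star_{x} \TrInv{x}\paren{Yx}}}$. There are two cases, according to whether $Yx$ has a weak normal form.

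\emph{Case 1: $Yx$ has a normal form $N$.} Then $x^{r}N$ is also normal, because $x$ is a variable and creates no redex. So $\TrInv{x}\paren{x^{r}\paren{Yx}} = x^{r}N$. This is a normal term containing strictly more occurrences of $x$ than $N$, so it differs from $N = \TrInv{x}\paren{Yx}$.

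\emph{Case 2: $Yx$ has no normal form.} Then $\TrInv{x}\paren{Yx} = \sqbra{\alpha}$ with $\alpha = \tau_{x}\paren{Yx}$. Since $\abs{x}_{x} = 1$, the definition of $\star_{x}$ gives
\[
\TrInv{x}\paren{x^{r}\paren{Yx}} = \ShiftTr_{1}^{r}\sqbra{\alpha} = \sqbra{\ShiftTr_{r}\alpha}.
\]
It therefore suffices to show $\alpha \not\tail \ShiftTr_{r}\alpha$. Write $\alpha = \paren{\paren{p_{i},q_{i}}}_{i}$ and suppose $\alpha_{i+k} = \paren{\ShiftTr_{r}\alpha}_{j+k}$ for all $k$, that is, $p_{i+k} = p_{j+k} + r$ and $q_{i+k} = q_{j+k}$.
\begin{itemize}
\item If $i = j$, this contradicts $r \geq 1$.
\item If $i < j$, iterating with step $d = j - i$ gives $p_{i} = p_{i+md} + mr$ for all $m$. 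This forces $p$ to become negative, which is impossible.
\item If $i > j$, then $p_{j+k+md} = p_{j+k} + mr$ with $d = i - j$, and $q$ is eventually $d$--periodic. So the trace would have to drift linearly to the right while the sizes $q_{i}$ of the duplicated arguments repeat periodically. This subcase must be ruled out.
\end{itemize}

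There is a subcase of Case 2 that is easy: $Y$ itself has no normal form. Then $\TrInv{x}\paren{Yx} = \TrInv{x}\paren{Y} \star_{x} x = \TrInv{x}\paren{Y}$. Since $Y$ is $x$--free, every term in its leftmost-innermost reduction is $x$--free, so $p_{i} = q_{i} = 0$ for all $i$. The shifted trace has $p_{i} = r$ throughout, so the two are not tail equivalent. The remaining situation is $Y$ normal and $Yx$ non-normalising.

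The main obstacle is excluding the linear-drift scenario there. I would first analyse the leftmost-innermost reduction of $Yx$ with $Y$ an $x$--free normal form. The variable $x$ is inert: it never becomes the head of a contracted redex, and new occurrences arise only by $\M$ duplicating an argument $A_{i}$ containing $x$ (which is exactly what $q_{i}$ records). The occurrences of $x$ counted in $p_{i}$ sit to the left of the current redex, inside subterms that leftmost-innermost reduction has already normalised and will not touch again except through a later $\M$--contraction whose argument contains them.

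I would then try to show that $p_{i}$ can increase only when such a frozen left part is absorbed into a duplicated argument. That would tie any increase of $p$ over a period to the values of $q$ in that period. A $d$--periodic $q$ with $p$ growing by $r$ per period should then be impossible, because the absorbed prefixes would have to grow unboundedly while $q$ stays bounded. Making this bookkeeping of $x$--occurrences precise, presumably via a pumping or periodicity argument on the shape of the leftmost-innermost redex contexts $\Gamma_{i}$, is the step I expect to require the real work. I would follow the structure of the proof of Theorem~3.1 in \citep{Imamura_2026} here.
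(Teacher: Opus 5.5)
The paper takes this statement from the proof of Theorem~3.1 in \citep{Imamura_2026} without reproving it, so there is no in-paper argument to compare against. Judged on its own, your proposal has a genuine gap: the cases you settle are the routine ones, and the case that carries the content is left open. Your Case~1, the subcase where $Y$ has no normal form, and the subcases $i \le j$ of the tail-equivalence analysis are all correct. The remaining situation is $Y$ an $x$--free normal form, $Yx$ non-normalising, and eventually $p_{k+d} = p_{k} + r$, $q_{k+d} = q_{k}$. That is precisely the trace any $Y$ with $Yx \eqw x^{r}\paren{Yx}$ would have to produce. So excluding it is the whole of Smullyan's problem, not a bookkeeping step to be deferred.

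The route you sketch would also not close the gap, for two reasons.

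First, no argument that uses only trace combinatorics and the inertness of $x$ can work. For $u := \M\paren{\B x\M}$ one has $u \toli \B x\M\paren{\B x\M} \toli xu$, so $\tau_{x}\paren{u} = \paren{\paren{k,1}}_{k\in\Nat}$. This is tail equivalent to $\ShiftTr_{1}\tau_{x}\paren{u}$: $p$ drifts linearly while $q$ stays constant. The $x$--freeness of $Y$ must therefore enter through a structural invariant of the reducts of $Yx$ that $u$ violates. One such invariant is that every subterm containing $x$ has $x$ as its rightmost symbol. It holds for $Yx$, it is preserved by $\B$--, $\M$-- and $\I$--contractions, and it fails for $\B x\M$. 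Your sketch identifies no invariant of this kind.

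Second, your proposed lemma, that $p$ increases only when a frozen left part is absorbed into a duplicated argument, is false even for terms of the form $Yx$. Take the paper's Example with $T = \B\M\paren{\B\paren{\B\M}\B}$ and look between the contractions of $\M A_{2}$ (where $p = 1$) and $\M A_{3}$ (where $p = 3$). The increase comes from $\B x A_{1} A_{2} \toli x\paren{A_{1}A_{2}}$ and $\B x A_{0} A_{2} \toli x\paren{A_{0}A_{2}}$. These emit $x$'s from the head copy of the duplicated argument, and the outer $x$ is never reabsorbed. So increases of $p$ can be fed by the $x$'s counted in $q$, and a bounded periodic $q$ gives no contradiction by itself, as $u$ shows.

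What is missing is an argument, built on an invariant like the one above, that such emissions cannot recur periodically while $q$ stays bounded. In $Tx$, for example, they are accompanied by growth of the duplicated argument, $A_{n+1} = \B x A_{n}$.
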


\subsection{Absence of \texorpdfstring{$\Y$}{Y}--elements}

Now, let us prove that no $\Y$--element exists in this $\BMI$--algebra.
The proof proceeds by showing that if $Y \star_{x} \TrInv{x}\paren{\M\M}$ yields a fixed point for $L_{\TrInv{x}\paren{\M\M}}$, then $Y \star_{x} x$ cannot be a fixed point for the left translation $L_{x}$.

\begin{theorem}\label{thm:infinite-bmi-wo-y}
Let $r\geq 1$.
For every $Y \in \Alg_{\BMI}$, there exists an $a \in \Alg_{\BMI}$ such that
\[
Y \star_{x} a \neq a^{r} \star_{x} \paren{Y \star_{x} a}.
\]
In particular, the $\BMI$--algebra $\Alg_{\BMI}$ has no $\Y$--element.
\end{theorem}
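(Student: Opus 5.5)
We argue by cases on whether $Y$ is a normal term or a trace class, choosing $a$ from two test elements: the variable $x$ and $\omega := \TrInv{x}\paren{\M\M}$. First compute $\omega$. Since $\M\M \toli \M\M \toli \cdots$, every contraction has empty context and argument $\M$. Hence $\tau_{x}\paren{\M\M} = \paren{\paren{0,0}}_{i\in\Nat}$, and $\omega = \sqbra{\paren{\paren{0,0}}_{i}}$. For $c \geq 1$, $\ShiftTr_{c}\omega = \sqbra{\paren{\paren{c,0}}_{i}}$ is not tail equivalent to $\omega$, because no tail of the constant sequence $\paren{c,0}$ coincides with a tail of the constant sequence $\paren{0,0}$. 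So $\ShiftTr_{c}\omega \neq \omega$ for every $c\geq 1$. Also, $x$ is a normal term, so $\TrInv{x}\paren{x} = x$ with $\abs{x}_{x} = 1$.

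\emph{Case 1: $Y \in \QTraces$.} By definition $Y \star_{x} a = Y$ for every $a$. If $Y \neq \omega$, take $a = \omega$. Since $\omega$ is a trace class, $\omega^{r}\star_{x}\paren{Y\star_x \omega} = \omega \neq Y$ (using $r\geq 1$). If $Y = \omega$, take $a = x$. Then $x$ is normal and $Y$ is a trace, so each application of $x$ shifts by $\abs{x}_{x}=1$. Thus $x^{r}\star_{x} Y = \ShiftTr_{r}\omega \neq \omega = Y\star_{x} x$ by the computation above.

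\emph{Case 2: $Y = N \in \NTerms_{\BMI}$ and $\abs{N}_{x} \geq 1$.} Take $a = \omega$. Then $N \star_{x} \omega = \ShiftTr_{\abs{N}_{x}}\omega$, which is a trace class, and $\omega^{r}\star_{x}\paren{\cdot} = \omega$. So the two sides are $\ShiftTr_{\abs{N}_{x}}\omega$ and $\omega$, and they differ since $\abs{N}_{x}\geq 1$.

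\emph{Case 3: $Y = N \in \NTerms_{\BMI}$ and $N$ is $x$--free.} Take $a = x$. Since $\TrInv{x}$ is a homomorphism and fixes the normal terms $N$ and $x$, we get $N \star_{x} x = \TrInv{x}\paren{Nx}$. Similarly, $x^{r}\star_{x}\paren{N\star_{x} x} = \TrInv{x}\paren{x^{r}\paren{Nx}}$. These differ by \prettyref{fact:trace-separation} applied to the $x$--free term $N$.

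This covers all $Y$, and taking $r=1$ gives that $\Alg_{\BMI}$ has no $\Y$--element. The only genuinely hard input is Case 3, which is exactly \prettyref{fact:trace-separation}; we take it as given. The remaining step needing care is the non-tail-equivalence $\ShiftTr_{c}\omega\neq\omega$, which uses both that the trace of $\M\M$ is constant and that $c\geq1$. The case split is essential because a general trace class $\sqbra{\alpha}$ may satisfy $\ShiftTr_{r}\sqbra{\alpha} = \sqbra{\alpha}$ (e.g.\ $p_i = i$ with constant $q_i$), so the trace case cannot be handled with $a=x$ alone.
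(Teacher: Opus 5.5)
Your proof is correct and is essentially the paper's argument: you use the same two witnesses $x$ and $\omega = \TrInv{x}\paren{\M\M}$, the same fact $\ShiftTr_{c}\omega \neq \omega$ for $c \geq 1$, and the same appeal to \prettyref{fact:trace-separation} when $Y$ is an $x$--free normal term. The only difference is presentation: the paper argues that if $Y$ passes the $\omega$--test then $\abs{Y}_{x}=0$ or $Y=\omega$, and so $Y$ fails the $x$--test, whereas you unfold this into an explicit four-way case split.
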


\begin{proof}
Let $\omega := \TrInv{x}\paren{\M\M} \in \QTraces$.
Since $x$ does not appear in the $\toli$--sequence of $\M\M$, the $x$--trace of $\M\M$ is
\[
\tau_{x}\paren{\M\M} = \paren{\paren{0,0}}_{i\in\Nat}.
\]
Thus, for every $c > 0$,
\[
\omega \neq \ShiftTr_{c} \omega.
\]

Now, let $Y \in \Alg_{BMI}$ and suppose that
\[
Y\star_{x} \omega = \omega^{r} \star_{x} \paren{Y\star_{x} \omega}.
\]
If $Y \in \NTerms_{\BMI}$, then
\begin{align*}
\omega &= \omega \star_{x} \paren{\omega^{r-1} \star_{x} \paren{Y \star_{x} \omega}} \\
&= \omega^{r} \star_{x} \paren{Y \star_{x} \omega} \\
&= Y \star_{x} \omega \\
&= S_{\abs{Y}_{x}} \omega.
\end{align*}
It follows that $\abs{Y}_{x} = 0$.
By \prettyref{fact:trace-separation},
\[
Y \star_{x} x \neq x^{r} \star_{x} \paren{Y \star_{x} x}.
\]
On the other hand, if $Y \in \QTraces$, then
\begin{align*}
Y \star_{x} x &= Y \\
&= Y \star_{x} \omega \\
&= \omega^{r} \star_{x} \paren{Y \star_{x} \omega} \\
&= \omega \star_{x} \paren{\omega^{r-1} \star_{x} \paren{Y \star_{x} \omega}} \\
&= \omega, \\
x^{r} \star_{x} \paren{Yx} &= x^{r} \star_{x} \omega \\
&= \ShiftTr_{r} \omega.
\end{align*}
Since $\omega \neq \ShiftTr_{r} \omega$, we have
\[
Y \star_{x} x \neq x^{r} \star_{x} \paren{Y \star_{x} x}.
\]

Thus, no element $Y$ of $\mathcal{A}_{BMI}$ satisfies $Y \star_{x} \omega = \omega^{r} \star_{x} \paren{Y \star_{x} \omega}$ and $Y \star_{x} x = x^{r} \star_{x} \paren{Y \star_{x} x}$ simultaneously.
\end{proof}

The proof also shows that the witness $a$ can be chosen from the set $a\in \set{x, \omega}$.
This is a much stronger property than the non-existence of a $\Y$--element.

\begin{proposition}\label{prop:cardinality-of-trace-algebra}
The $\BMI$--algebra $\Alg_{\BMI}$ is of cardinality $2^{\aleph_{0}}$.
\end{proposition}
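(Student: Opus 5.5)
Since $\Alg_{\BMI} = \NTerms_{\BMI} \sqcup \QTraces$, I will bound the two pieces separately and then show the trace part realises the full continuum.

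For the upper bound, $\NTerms_{\BMI}$ is a subset of $\Terms_{\BMI}$, which consists of finite strings over a countable alphabet (countably many variable symbols, the constants $\B,\M,\I$, and parentheses), so it is countable. The set $\Traces$ has cardinality $\aleph_{0}^{\aleph_{0}} = 2^{\aleph_{0}}$, and its quotient $\QTraces$ has cardinality at most that. Hence $\abs{\Alg_{\BMI}} \leq \aleph_{0} + 2^{\aleph_{0}} = 2^{\aleph_{0}}$.

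For the lower bound, I first note that every tail-equivalence class is countable. If $\alpha \tail \beta$, there are $i,j$ with $\beta_{j+k} = \alpha_{i+k}$ for all $k$. Thus $\beta$ is determined by the finite prefix $(\beta_{0},\dots,\beta_{j-1}) \in \paren{\Nat\times\Nat}^{<\omega}$ together with $i \in \Nat$. There are only countably many such data, so each class $[\alpha]$ is countable. Consequently $\Traces$, which has size $2^{\aleph_{0}}$, is a union of $\abs{\QTraces}$ countable sets, and this forces $\abs{\QTraces} \geq 2^{\aleph_{0}}$.

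Combining the two bounds gives $\abs{\Alg_{\BMI}} = 2^{\aleph_{0}}$. The one point needing care is that the underlying set of $\Alg_{\BMI}$ is literally the formal disjoint union $\NTerms_{\BMI} \sqcup \QTraces$. It is not required that every class be of the form $\sqbra{\tau_{x}(u)}$ for some term $u$, and indeed only countably many classes arise that way. So no realisability argument is needed, and the counting above applies directly to the sets in the definition.
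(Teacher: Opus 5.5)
Your proof is correct, but your lower bound takes a different route from the paper's. The paper constructs an explicit injection $f\colon \Nat^{\Nat} \to \QTraces$, $f\paren{\alpha} := \paren{\paren{\prod_{j\leq i} p_{j}^{\alpha_{j}}, i}}_{i\in\Nat}$. Because the second coordinate is the index $i$ itself, any tail equivalence between $f\paren{\alpha}$ and $f\paren{\beta}$ must have zero offset. Unique prime factorisation then recovers $\alpha$ from the first coordinates.

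You instead show that every $\tail$--class is countable, which is right: given $\alpha$, each $\beta \tail \alpha$ is determined by a finite prefix and an index. You then conclude by cardinal arithmetic. Write that final step out rather than saying ``this forces'': $2^{\aleph_{0}} = \abs{\Traces} \leq \abs{\QTraces}\cdot\aleph_{0} = \max\paren{\abs{\QTraces},\aleph_{0}}$, and $2^{\aleph_{0}} > \aleph_{0}$.

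\emph{What each approach buys.} Your argument is more general. It works for any equivalence relation with countable classes on a set of size continuum, and it needs no rigid encoding. However, it uses the axiom of choice in the lower bound: once to bound a union of $\kappa$ countable sets by $\kappa\cdot\aleph_{0}$, and again for the absorption law. The paper's injection gives the lower bound without choice and exhibits continuum many pairwise non-tail-equivalent traces concretely. Both proofs already appeal to choice for the upper bound $\abs{\QTraces}\leq\abs{\Traces}$, so in ZFC nothing is lost either way.

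Your closing remark is also correct and matches the paper's setup. The underlying set is the formal disjoint union, so no realisability of traces by terms is needed.
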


\begin{proof}
Define the map $f\colon \Nat^{\Nat} \to \QTraces$ by
\[
f\paren{\alpha} := \paren{\paren{\prod_{j \leq i}p_{j}^{\alpha_{j}}, i}}_{i\in \Nat},
\]
where $p_{j}$ denotes the $j$--th prime.
The map $f$ is injective, so
\[
2^{\aleph_{0}} = \abs{\Nat^{\Nat}} \leq \abs{\QTraces}.
\]
On the other hand, we have
\[
\abs{\QTraces} \leq \abs{\Traces} = 2^{\aleph_{0}}
\]
by the axiom of choice.
Hence
\[
\abs{\Alg_{\BMI}} = \abs{\NTerms_{\BMI}} + \abs{\QTraces} = \aleph_{0} + 2^{\aleph_{0}} = 2^{\aleph_{0}}. \qedhere
\]
\end{proof}

\begin{corollary}
For every infinite cardinality $\kappa$, there exists a $\BMI$--algebra of cardinality $\kappa$ that has no $\Y$--element.
\end{corollary}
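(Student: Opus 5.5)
We need a $\BMI$--algebra of every infinite cardinality $\kappa$ without a $\Y$--element. We have one of cardinality $2^{\aleph_0}$, namely $\Alg_{\BMI}$ (\prettyref{prop:cardinality-of-trace-algebra}, \prettyref{thm:infinite-bmi-wo-y}). The failure of the $\Y$--property is witnessed by the single first-order sentence
\[
\forall y\,\exists a\;\neg\paren{y a = a\paren{y a}},
\]
in the language $\set{\ast, B, M, I}$. The $\BMI$ axioms are universal sentences in this language, so $\Alg_{\BMI}$ is an infinite model of a countable first-order theory $T$ consisting of the $\BMI$ axioms together with this sentence.

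The plan is to apply the Löwenheim--Skolem theorem in both directions.

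For $\kappa \leq 2^{\aleph_0}$, I will use downward Löwenheim--Skolem. Starting from any subset of $\Alg_{\BMI}$ of cardinality $\kappa$, it gives an elementary substructure $\mathcal{A}' \preceq \Alg_{\BMI}$ of cardinality exactly $\kappa$ containing $\B,\M,\I$. Elementarity preserves the $\BMI$ axioms and the negated $\Y$ sentence, so $\mathcal{A}'$ is the required algebra.

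For $\kappa > 2^{\aleph_0}$, I will use upward Löwenheim--Skolem, or equivalently an ultrapower or compactness argument, to obtain an elementary extension of cardinality $\kappa$. It satisfies the same sentences, hence it is a $\BMI$--algebra with no $\Y$--element.

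The only point requiring care is that the non-existence of a $\Y$--element really is first-order and is preserved under elementary equivalence, which is immediate from the displayed sentence. Everything else is routine.

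A more concrete alternative for the downward direction is to close a set of size $\kappa$ under $\star_x$ and $\B,\M,\I$, taking it to contain $x$ and $\omega=\TrInv{x}\paren{\M\M}$. The result is a subalgebra of size $\kappa$. By the remark after \prettyref{thm:infinite-bmi-wo-y}, every $Y$ in it fails the $\Y$--equation at one of the witnesses $x$ or $\omega$, so it has no $\Y$--element. Even without model theory, this settles $\aleph_0 \le \kappa \le 2^{\aleph_0}$ directly. Larger $\kappa$ still need the upward Löwenheim--Skolem step.
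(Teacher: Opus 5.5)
Your proof is correct and follows the paper's route. The paper derives the corollary in one line from $\abs{\Alg_{\BMI}} = 2^{\aleph_{0}}$ (Proposition~\ref{prop:cardinality-of-trace-algebra}) and the L\"owenheim--Skolem theorem; your explicit sentence $\forall y\,\exists a\,\neg\paren{ya = a\paren{ya}}$ and the split into downward and upward directions spell out what the paper leaves implicit. Your elementary alternative for $\aleph_{0} \leq \kappa \leq 2^{\aleph_{0}}$ is also valid: the proof of Theorem~\ref{thm:infinite-bmi-wo-y} does show that the witness can always be taken from $\set{x, \omega}$, so the subalgebra you describe has no $\Y$--element.
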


\begin{proof}
Immediate from \prettyref{prop:cardinality-of-trace-algebra} and the L\"owenheim--Skolem theorem.
\end{proof}

\section{Finite \texorpdfstring{$\BMI$}{BMI}--algebras}\label{sec:finite-bmi-algebras}

In this section, we investigate the structures of finite $\BMI$--algebras.
As we shall see later, a finite $\BMI$--algebra can be characterised as a finite idempotent semigroup with three left units.
It follows that every finite $\BMI$--algebra possesses a $\Y$--element.

\subsection{Finite \texorpdfstring{$\BI$}{BI}--algebras as semigroups}

We begin by examining the structure of finite $\BI$--algebras.

\begin{proposition}\label{prop:finite-bi-is-semigrp-w-left-unit}
Every finite $\BI$--algebra is a semigroup with $B$ and $I$ as its left units.
\end{proposition}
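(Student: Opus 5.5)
The plan is to reduce everything to the single statement that $B$ is a left unit, and to get that from finiteness via an idempotent-power argument. That $I$ is a left unit holds by definition of an $\I$--element. Associativity will then follow at once: if $Bx = x$ for all $x$, then $(xy)z = (Bxy)z = x(yz)$ by the defining identity of the $\B$--element. Write $L_a\colon A\to A$, $L_a(b) = ab$, for left translation; the $\B$--identity reads $L_{Bab} = L_a\circ L_b$.

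First I would collect some identities. Putting $a = I$ gives $L_{BIb} = L_I\circ L_b = L_b$. Since $x = Ix$, every element is a product, so $\{bc\}$ exhausts $A$. Hence $L_{Ba}$ determines $L_a$: if $Ba = Ba'$, then $a(bc) = a'(bc)$ for all $b,c$, so $L_a = L_{a'}$. Next, putting $a = B$ gives $(BBb)c = B(bc)$, i.e.
\[
L_{BBb} = L_B\circ L_b .
\]
Thus the left translation $L_B$ absorbs compositions in a controlled way. Together with $L_{Bab}=L_a\circ L_b$, this shows that $\{L_a : a\in A\}$ is a finite monoid of self-maps of $A$ with identity $L_I$.

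Second, the finiteness step. The set of maps $\{L_B^{n} : n\ge 1\}$ is finite, so some power $e = L_B^{k}$ with $k\ge 1$ is idempotent. I would then show two things. First, every element lies in the image of $L_B$. The candidate witness for $x$ is $x = Ix = (BII)x$, rewritten using $L_{BIb}=L_b$ and $L_{BBb}=L_B\circ L_b$ so that $x$ appears as $B$ applied to something. Second, once $L_B$ is surjective, it is a bijection because $A$ is finite, so $e = \id$, i.e.\ $B^{k}x = x$ for all $x$. Finally, from $B^{k}x=x$ I would deduce $Bx=x$. Applying the $\B$--identity $k$ times shows that $L_{B^{k}x}$ and $L_{Bx}$ act in the same way after precomposition. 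Combining this with injectivity of $L_B$ gives $B(Bx) = Bx \Rightarrow Bx = x$ after suitable manipulation.

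The main obstacle is exactly the passage from equalities of \emph{left translations} (which the axioms readily give) to equalities of \emph{elements} ($Bx = x$). The $\B$-- and $\I$--identities only ever constrain what an element does when applied, so without finiteness one cannot hope to conclude $Bx=x$; the whole content is in the surjectivity of $L_B$. My first attempt for surjectivity will be to express an arbitrary $x$ as $B$ applied to a suitable combination of $I$, $B$ and $x$, using $x = Ix$ and $I\cdot(yz)=yz$. If that fails, I will instead apply a pigeonhole argument to the orbit $x, Bx, B(Bx),\dots$, which must become periodic, and show that the preperiodic part is empty. For the latter I would use the injectivity-up-to-translation established in the first step.
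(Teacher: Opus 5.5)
Your reduction is correct: it suffices to show $B$ is a left unit, since associativity then follows from the $\B$--identity. Your preliminary identities are also correct. The gap is the central step, surjectivity of $L_B$ on $A$: it is never established, and neither of your two routes can deliver it as described.

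The first route fails in principle. Rewriting $x$ as $B$ applied to a combination of $B$, $I$, $x$ using only the $\B$-- and $\I$--identities would give an equation valid in every $\BI$--algebra. That includes the free one (terms modulo $\eqw$). There a variable $x$ is a normal form, and every reduct of $\B t$ still has the form $\B t'$, so $x$ is not weakly equal to any $\B t$. Hence surjectivity of $L_B$ is not an equational consequence, and finiteness must enter essentially.

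The fallback stalls at exactly the obstacle you named. Suppose $y$ is not periodic but $By$ has period $p$. Then $B\paren{B^{p}y} = By$, and your injectivity-up-to-translation yields only $L_{B^{p}y} = L_{y}$. This is no contradiction: distinct elements of a finite $\BI$--algebra can share a left translation. For example, in a two-element right-zero semigroup, $B\neq I$ are both left units. Your last step, from $B^{k}x=x$ to $Bx=x$, is also only gestured at. Once $L_B$ is known to be surjective, the intermediate claim $B\paren{Bx}=Bx$ for all $x$ is already equivalent to $L_B=\id$, so it cannot serve as a stepping stone.

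The missing idea is to apply finiteness to the set $\set{L_a | a\in A}$ of left translations, not to orbits in $A$. The assignment $\varphi\paren{L_c} := L_{cI}$ is a well-defined self-map of this finite set, because $L_c$ determines $cI$. The identity $L_{BaI} = L_a\circ L_I = L_a$ says $\varphi\paren{L_{Ba}} = L_a$, so $\varphi$ is surjective and therefore injective. Your observation that $L_{Ba}$ determines $L_a$ is this same identity, but read only as a partial inverse rather than as surjectivity of a total map on a finite set. Then
\[
\varphi^{2}\paren{L_B} = L_{BII} = \id = L_{III} = \varphi^{2}\paren{L_I},
\]
since $BIIx = I\paren{Ix} = x$ and $III = I$. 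Injectivity gives $L_B = L_I = \id$ directly, with no analysis of $L_B$ on $A$ needed. This is the paper's argument.
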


\begin{proof}
Let $\mathcal{A} = \paren{A, \ast, B, I}$ be a finite $\BI$--algebra.
For each $a \in A$, let $L_{a} \colon A \to A$ be its left translation:
\[
L_{a}\paren{x} := a x.
\]
Consider the (finite) set of all those maps:
\[
\mathcal{L} := \set{L_{a} | a \in A}.
\]

We first note that an element $a \in A$ is left unit if and only if $L_{a} = \id_{A}$.
The element $I$ is a left unit by definition.
We only need to show that the operator of $\mathcal{A}$ is associative and that $B$ is another left unit.

Define the map $\varphi \colon \mathcal{L} \to \mathcal{L}$ by
\[
\varphi\paren{f} := L_{f\paren{I}}.
\]

Let $a \in A$.
Since $B a I x = a \paren{I x} = a x$ for all $x \in A$, we have
\[
\varphi\paren{L_{B a}} = L_{L_{B a}\paren{I}} = L_{B a I} = L_{a}.
\]
Thus $\varphi$ is surjective, and therefore bijective by $\abs{A} < \aleph_{0}$.

For every $x \in A$, since $B I I x = I \paren{I x} = I x = x = I I I x$, we have
\[
\varphi^{2}\paren{L_{B}} = \varphi\paren{L_{BI}} = L_{B I I} = L_{I I I} = \varphi\paren{L_{II}} = \varphi^{2}\paren{L_{I}}.
\]
Hence $L_{B} = L_{I} = \mathrm{id}_{A}$ by the injectivity of $\varphi$.
This means that $B$ is a left unit.

The $B$--axiom now implies that the associativity $\paren{xy}z = \paren{\paren{Bx}y}z = x\paren{yz}$.
\end{proof}

In fact, the converse also holds.
A finite $\BI$--algebra is therefore precisely a finite semigroup with two specified left units.

\begin{proposition}\label{prop:semigrp-w-left-unit-is-bi}
Let $\mathcal{A}=\paren{A, \ast}$ be a semigroup with left units $B$ and $I$.
(Here, $B$ and $I$ are not necessarily distinct.)
Then $\paren{A, \ast, B, I}$ is a $\BI$--algebra.
\end{proposition}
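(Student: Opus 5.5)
The plan is to verify the two defining equations directly from associativity and the left-unit property; no finiteness is needed. Throughout, $\mathcal{A}=\paren{A,\ast}$ is a semigroup and $B$, $I$ are left units, meaning $Bx = x$ and $Ix = x$ for all $x\in A$.

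For the $\I$--axiom there is nothing to do. Since $I$ is a left unit, $Ix = x$ holds for every $x \in A$, which is exactly the statement that $I$ is an $\I$--element.

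For the $\B$--axiom, fix $x,y,z\in A$. Reading left-associatively, $Bxyz$ means $\paren{\paren{Bx}y}z$. First, $Bx = x$ because $B$ is a left unit, so $\paren{\paren{Bx}y}z = \paren{xy}z$. Second, associativity of $\ast$ gives $\paren{xy}z = x\paren{yz}$. Hence $\paren{\paren{Bx}y}z = x\paren{yz}$ for all $x,y,z$, so $B$ is a $\B$--element.

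I expect no real obstacle. The only point worth stating is that the argument never uses $B\neq I$, which is why the hypothesis allows them to coincide. Combined with \prettyref{prop:finite-bi-is-semigrp-w-left-unit}, this shows that finite $\BI$--algebras are exactly finite semigroups with two specified left units.
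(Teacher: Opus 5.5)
Your proof is correct and matches the paper's. Both verify the $\B$--identity as $\paren{\paren{Bx}y}z = \paren{xy}z = x\paren{yz}$, using that $B$ is a left unit and that $\ast$ is associative. The $\I$--axiom is immediate because $I$ is a left unit; the paper leaves this step implicit, and you state it explicitly.
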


\begin{proof}
Let $x, y, z \in A$.
Since $\ast$ is associative and $B$ is a left unit, we have
\[
\paren{\paren{B x} y} z = B x y z = x y z = x \paren{y z}. \qedhere
\]
\end{proof}

\subsection{Finite \texorpdfstring{$\BMI$}{BMI}--algebras as idempotent semigroups}

\begin{theorem}\label{thm:finite-bmi-is-idempotent}
Every finite $\BMI$--algebra is an idempotent semigroup with $B$, $M$ and $I$ as its left units.
\end{theorem}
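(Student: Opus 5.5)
By \prettyref{prop:finite-bi-is-semigrp-w-left-unit}, forgetting $M$ turns any finite $\BMI$--algebra $\mathcal{A}=\paren{A,\ast,B,M,I}$ into a semigroup in which $B$ and $I$ are left units. What remains is to show that $M$ is a left unit and that every element is idempotent. These two facts are equivalent once we have associativity: for all $x$ we have $Mx = xx$, so $M$ is a left unit exactly when $xx = x$ for every $x\in A$. The plan is therefore to prove idempotency using associativity, the left units $B,I$, and the $\M$--axiom.

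I would argue in the finite semigroup $\paren{A,\ast}$. First consider $MM = MM$ and more generally powers of $M$. Since $Mx = xx$, we get $M\paren{Mx} = \paren{Mx}\paren{Mx} = xxxx$. By associativity, $M\paren{Mx} = \paren{MM}x$, and $MM = M\ast M$ is just the square of $M$. Inductively, $M^{k}x = x^{2^{k}}$, where the left side is the semigroup power of $M$ applied to $x$. Also, $Mx = xx = x^{2}$ gives $x^{2} = Mx$ for every $x$.

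Next apply the $\M$--axiom to $x = M$. This gives $MM = MM$, which is trivial. The better choice is to use the left units. Take $x = I$: then $MI = II = I$, since $I$ is a left unit. So $MI = I$, and therefore $M\paren{Iy} = \paren{MI}y = Iy = y$ by associativity. On the other hand $M\paren{Iy} = My$. Hence $My = y$ for all $y$, so $M$ is a left unit. Then $xx = Mx = x$ for every $x$, which gives idempotency.

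I expect no real obstacle. The only delicate point is that associativity comes from \prettyref{prop:finite-bi-is-semigrp-w-left-unit}, which is where finiteness is used; after that, the computation $My = \paren{MI}y = Iy = y$ is immediate. The powers-of-$M$ computation above is unnecessary and can be dropped.
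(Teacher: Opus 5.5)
Your proposal is correct and is essentially the paper's proof: after invoking \prettyref{prop:finite-bi-is-semigrp-w-left-unit} for associativity, the paper computes $xx = Mx = M\paren{Ix} = \paren{MI}x = IIx = x$, which is exactly your $My = \paren{MI}y = Iy = y$ argument. As you yourself note, the detour through powers of $M$ is unnecessary and can be dropped.
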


\begin{proof}
Let $\mathcal{A} = \paren{A, \ast, B, M, I}$ be a finite $\BMI$--algebra.
By \prettyref{prop:finite-bi-is-semigrp-w-left-unit}, $\paren{A, \ast}$ is a semigroup with $B$ and $I$ as left units.
Let $x \in A$.
By the associativity,
\[
xx = Mx = M\paren{Ix} = \paren{MI}x = II x = x. \qedhere
\]
\end{proof}

The converse of \prettyref{thm:finite-bmi-is-idempotent} also holds.
Therefore, a finite $\BMI$--algebra is precisely a finite idempotent semigroup with three specified left units.

\begin{theorem}\label{thm:idempotent-semigrp-w-left-unit-is-bmi}
Let $\paren{A, \ast}$ be an idempotent semigroup with left units $B$, $M$ and $I$.
Then $\paren{A, \ast, B, M, I}$ is a $\BMI$--algebra.
\end{theorem}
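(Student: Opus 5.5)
We verify the three axioms separately, using only associativity, idempotency and the left-unit property. No finiteness is needed, in contrast to \prettyref{thm:finite-bmi-is-idempotent}.

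For the $\B$--axiom, \prettyref{prop:semigrp-w-left-unit-is-bi} applies directly to the semigroup $\paren{A,\ast}$ with left units $B$ and $I$. It gives $\paren{\paren{Bx}y}z = x\paren{yz}$ for all $x,y,z \in A$, and $Ix = x$ for all $x$. Neither of these uses idempotency.

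For the $\M$--axiom, take $x \in A$. Since $M$ is a left unit, $Mx = x$. Since $\ast$ is idempotent, $x = xx$. Together these give $Mx = xx$, so $M$ is an $\M$--element.

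Thus $\paren{A,\ast,B,M,I}$ is a $\BMI$--algebra. Every step is a one-line check, so there is no real obstacle. The only point needing care is that $B$, $M$ and $I$ may coincide; this is harmless, since each axiom is verified separately and \prettyref{prop:semigrp-w-left-unit-is-bi} already allows $B = I$.
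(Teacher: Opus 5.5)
Your proof is correct and follows the paper's argument exactly: Proposition~\ref{prop:semigrp-w-left-unit-is-bi} gives the $\B$-- and $\I$--axioms, and the left-unit property of $M$ combined with idempotency gives $Mx = x = xx$.
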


\begin{proof}
The structure $\paren{A, \ast, B, I}$ is a $\BI$--algebra by \prettyref{prop:semigrp-w-left-unit-is-bi}.
Moreover, for every $x \in A$, since $M$ is a left unit and $x$ is idempotent,
\[
M x = x = x x.
\]
Hence $M$ is an $\M$--element.
\end{proof}

\begin{corollary}\label{cor:finite-bmi-has-y}
Every finite $\BMI$--algebra has a $\Y$--element.
\end{corollary}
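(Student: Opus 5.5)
By \prettyref{thm:finite-bmi-is-idempotent}, a finite $\BMI$--algebra $\mathcal{A}$ is an idempotent semigroup in which $B$, $M$ and $I$ are left units. The plan is to exhibit an explicit $\Y$--element in this structure. The natural candidate is $Y := M$, or equivalently $I$ or $B$. The reason is that the equation $Yx = x(Yx)$ becomes trivial once $Y$ is a left unit and every element is idempotent.

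Concretely, fix a left unit $e \in \{B, M, I\}$; for definiteness, take $e = I$. For any $x \in A$ we have $Ix = x$, since $I$ is a left unit. Idempotency gives $x = xx$, and therefore $x = x(Ix)$. Combining these, $Ix = x(Ix)$ for every $x$, which says precisely that $I$ is a $\Y$--element.

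There is essentially no obstacle here, because all of the work has already been done in \prettyref{prop:finite-bi-is-semigrp-w-left-unit} and \prettyref{thm:finite-bmi-is-idempotent}. The only care needed is to make sure that idempotency $xx = x$ holds for \emph{every} element $x$, and not only in some restricted sense. This is exactly what the proof of \prettyref{thm:finite-bmi-is-idempotent} establishes, via $xx = Mx = M(Ix) = (MI)x = IIx = x$, where associativity comes from the earlier proposition.

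It is also worth remarking that the same argument works for any closed $\BMI$--term whose interpretation is a left unit. For instance, any term built only from $B$, $M$ and $I$ evaluates to a left unit, since a product of left units is again a left unit in a semigroup. So in finite $\BMI$--algebras every closed $\BMI$--term is a $\Y$--element. This contrasts with the infinite algebra $\Alg_{\BMI}$ of \prettyref{thm:infinite-bmi-wo-y}.
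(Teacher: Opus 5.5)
Your proof is correct and is essentially the paper's own argument: \prettyref{thm:finite-bmi-is-idempotent} makes $\mathcal{A}$ an idempotent semigroup with left units, and any left unit $Y$ (you chose $I$) satisfies $Yx = x = xx = x\paren{Yx}$. Your closing remark is also sound, since for left units $e,f$ associativity gives $\paren{ef}x = e\paren{fx} = x$, so every closed $\BMI$--term is interpreted as a left unit and hence as a $\Y$--element.
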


\begin{proof}
Let $\mathcal{A} = \paren{A,\ast,B,M,I}$ be a finite $\BMI$--algebra.
Then $\paren{A, \ast}$ is an idempotent semigroup with $B$ and $I$ as its left units by \prettyref{thm:finite-bmi-is-idempotent}.
Now, let $Y$ be any left unit of $\mathcal{A}$.
For every $x \in A$, since $x$ is idempotent,
\[
Yx = x = xx = x\paren{Yx}.
\]
Hence $Y$ is a $\Y$--element.
\end{proof}

This explains why it is not possible to prove the absence of a fixed-point combinator by finding a finite-valued invariant.

\begin{corollary}
There is no pair $\paren{\mathcal{A}, f}$ of a finite $\BMI$--algebra $\mathcal{A}$ and a homomorphism $f\colon \Terms_{\BMI} / {\eqw} \to \mathcal{A}$ such that $f\paren{Yx} \neq f\paren{x\paren{Yx}}$ for all cosed terms $Y$ and variables $x$.
\end{corollary}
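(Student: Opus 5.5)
We argue by contradiction: suppose such a pair $\paren{\mathcal{A}, f}$ exists, with $\mathcal{A} = \paren{A, \ast, B, M, I}$ a finite $\BMI$--algebra and $f$ a homomorphism from $\Terms_{\BMI}/{\eqw}$ to $\mathcal{A}$. We take $f$ to be a homomorphism of $\BMI$--algebras, so $f\paren{\B} = B$, $f\paren{\M} = M$ and $f\paren{\I} = I$. (If only magma homomorphisms were intended, the images of $\B$, $\M$, $\I$ are still $\B$--, $\M$--, $\I$--elements of $\mathcal{A}$, because the corresponding equations hold in the free algebra and are preserved by $f$. This needs a small extra remark: the equations must hold for \emph{all} elements of $\mathcal{A}$, not only for those in the image of $f$. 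To avoid this point we restrict $\mathcal{A}$ to the image of $f$, which is a finite sub-$\BMI$--algebra and on which the equations do hold.)

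By \prettyref{thm:finite-bmi-is-idempotent}, $\paren{A,\ast}$ is an idempotent semigroup in which $I$ is a left unit. We choose a concrete closed term to contradict the assumed property; the simplest choice is $Y := \I$. Then, using only the homomorphism property and the algebraic structure, we compute
\[
f\paren{\I x} = I \ast f\paren{x} = f\paren{x},
\qquad
f\paren{x\paren{\I x}} = f\paren{x} \ast \paren{I \ast f\paren{x}} = f\paren{x}\ast f\paren{x} = f\paren{x},
\]
where the last equality uses idempotency. Hence $f\paren{\I x} = f\paren{x\paren{\I x}}$, which contradicts the hypothesis for the closed term $Y = \I$. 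Equivalently, this is the observation in \prettyref{cor:finite-bmi-has-y} that any left unit is a $\Y$--element, transported along $f$: the equation $Yx = x\paren{Yx}$ holds in $\mathcal{A}$ for the element $I$, and $I$ is the image of a closed term.

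The main step requiring care is the restriction to the image of $f$ in the first paragraph, since the conclusion rests on applying \prettyref{thm:finite-bmi-is-idempotent} to a finite $\BMI$--algebra. Once the image is identified as a finite $\BMI$--algebra, the computation above is immediate.
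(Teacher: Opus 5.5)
Your proof is correct and is essentially the argument the paper intends: by Theorem~\ref{thm:finite-bmi-is-idempotent} every element is idempotent, so, as in Corollary~\ref{cor:finite-bmi-has-y}, the left unit $f\paren{\I}$ is a $\Y$--element, and the closed term $\I$ already gives $f\paren{\I x} = f\paren{x\paren{\I x}}$. Your restriction to the image of $f$, which is a finite sub-$\BMI$--algebra, is valid and correctly covers the case where $f$ is only a magma homomorphism.
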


\section{Finite \texorpdfstring{$\BM$}{BM}--algebras}\label{sec:finite-bm-algebras}

In this section, we prove that every finite $\BM$--algebra has a $\Y$--element.
Unlike in the case of $\BMI$--algebras, $\BM$--algebras have entirely different structures.

\begin{proposition}\label{tab:bm-not-semigrp-exists}
There exists a finite $\BM$--algebra that is neither idempotent nor associative.
\end{proposition}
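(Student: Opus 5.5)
The statement is existential, so I plan to prove it by exhibiting one explicit finite magma with designated elements $B$ and $M$, given as a Cayley table, and checking it directly. I would not derive the example from the structure theory of the previous section. The theorems there force associativity and idempotency only when an $\I$--element is present. The example should therefore have no left unit, and in particular $B$ must not be a left unit, since otherwise the $\B$--axiom would give $(xy)z = (Bx)yz = x(yz)$.

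To locate the table I would use the same SAT-solver search behind \prettyref{obs:sat-observation}. One encodes a magma on $\{0,\dots,n-1\}$ together with two constants $B,M$, the $n^{3}$ instances of the $\B$--axiom $((Bx)y)z = x(yz)$, and the $n$ instances of the $\M$--axiom $Mx = xx$. Two further clauses are added: some $x$ with $xx \neq x$, and some triple with $(xy)z \neq x(yz)$. The search starts from $n=2$ and increases $n$ until a model appears.

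Before running the search I would rule out the obvious algebraic families. A constant operation is associative, and so is $xy = h(y)$ with $h\circ h = h$. Affine operations $xy = ax+by$ over $\mathbb{Z}_{n}$ force $a=0$ through the $\M$--axiom, and the result is again associative. This confirms that the example has to be a genuinely irregular small table, found by search. One useful simplification for the encoding is that the row of $M$ is forced to be the diagonal, so only the remaining rows are free variables.

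Once a table is found, I would present it and give the verification in three parts. First, the $\B$--axiom holds for all $x,y,z$. This is a finite check over $n^{3}$ triples, which can be organised by precomputing the rows $L_{Bx}$ and $L_{(Bx)y}$ and comparing them with $L_{x}\circ L_{y}$. Second, the $\M$--axiom holds, since the row of $M$ equals the diagonal. Third, two explicit witnesses are given: an element $x$ with $xx \neq x$, and a triple $x,y,z$ with $(xy)z \neq x(yz)$.

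I expect the main obstacle to be finding the table, not verifying it. Once it is written down, the check is mechanical and can be cross-checked by the code referenced in \prettyref{app:code-availability}.
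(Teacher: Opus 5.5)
Your proposal has a genuine gap: it never exhibits a magma. For an existential statement the witness is the proof. A SAT encoding together with the instruction to increase $n$ ``until a model appears'' presupposes exactly what is to be shown. Nothing you write rules out the search running forever, and your three-part verification has no table to be applied to. Your preliminary remarks are correct. $B$ cannot be a left unit, and in fact by Proposition~\ref{prop:finite-bi-is-semigrp-w-left-unit} no left unit can exist at all. The row of $M$ must be the diagonal. The constant, $xy=h(y)$ and affine families are all associative. But these remarks only discard candidates. They do not produce one, and they do not show that the example ``has to be'' an irregular table found by computer.

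The missing idea is a cheap way to satisfy the $\B$--axiom. Take $B$ to be a left zero and arrange that $x(yz)=B$ for all $x,y,z$. Then both sides of $((Bx)y)z = x(yz)$ are identically $B$.

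The paper does this on $\{0,1,2\}$ with $B=0$. The rows $(x\cdot 0, x\cdot 1, x\cdot 2)$ for $x=0,1,2$ are $(0,0,0)$, $(0,0,1)$ and $(0,0,1)$. Checking the axioms:
\begin{itemize}
\item All products lie in $\{0,1\}$, and columns $0$ and $1$ are zero, so $x(yz)=0=((0x)y)z$.
\item The diagonal $(0\cdot 0, 1\cdot 1, 2\cdot 2)=(0,0,1)$ coincides with rows $1$ and $2$, so either $M=1$ or $M=2$ is an $\M$--element.
\item $2\cdot 2=1\neq 2$, so the algebra is not idempotent.
\item $(2\cdot 2)\cdot 2 = 1\cdot 2 = 1 \neq 0 = 2\cdot 1 = 2\cdot(2\cdot 2)$, so it is not associative.
\end{itemize}
With this table in hand, your verification plan goes through by hand in a few lines, and no search is needed.
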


\begin{proof}
\begin{table}
\caption{\label{tab:bm-not-semigrp}A $\BM$--algebra that is not a semigroup.
}

\begin{tabular}{c|ccc|cccc}
$\ast$ & $0$ & $1$ & $2$ & $B$ & $M$ & $I$ & $Y$ \tabularnewline
\hline 
$0$ & $0$ & $0$ & $0$ & $\checkmark$ &  &  & $\checkmark$ \tabularnewline
$1$ & $0$ & $0$ & $1$ &  & $\checkmark$ &  &  \tabularnewline
$2$ & $0$ & $0$ & $1$ &  & $\checkmark$ &  &  \tabularnewline
\end{tabular}
\end{table}
Define the binary operation $\ast$ on $A_{3} := \set{0, 1, 2}$ by \prettyref{tab:bm-not-semigrp}.
The magma $\mathcal{A}:=\paren{A_{3}, \ast, B, M}$ forms a $\BM$--algebra, where $B=0$ and $M \in \set{1, 2}$.
However, $\mathcal{A}$ is neither idempotent nor associative:
\begin{align*}
2 \cdot 2 &= 1 \neq 2, \\
\paren{2 \cdot 2} \cdot 2 &= 1 \cdot 2 \\
&= 1, \\
2 \cdot \paren{2 \cdot 2} &= 2 \cdot 1 \\
&= 0.\qedhere
\end{align*}
\end{proof}

The results of the previous section rely on the fact that every finite $\BI$--algebra is a semigroup with $B$ as its left unit (\prettyref{prop:finite-bi-is-semigrp-w-left-unit}), which cannot be directly applied to $\BM$--algebras.
Instead, we reduce the problem to \prettyref{prop:finite-bi-is-semigrp-w-left-unit} by considering a certain idempotent left ideal of a finite $\B$--algebra.

\subsection{Left ideals \texorpdfstring{$A_{\infty}$}{A\_infty} of \texorpdfstring{$\B$}{B}--algebras}

We consider the left ideal $A_{\infty}$ of a $\B$--algebra.
In the finite case, we show that this left ideal is idempotent and contains the eventual image of every left translation.

\begin{definition}[cf.~\citet{Statman_2011,Ikebuchi_2020}]
Let $\mathcal{A} = \paren{A, \ast, B}$ be a $\B$--algebra.
Define another binary operation $\bullet\colon A\times A\to A$ by
\[
a \bullet b := Bab.
\]
\end{definition}

\begin{lemma}\label{lem:composition-homomorphism}
Let $\mathcal{A} = \paren{A, \ast, B}$ be a $\B$--algebra.
The set of left translations $\mathcal{L} := \set{L_{a} | a\in A}$ forms a semigroup with respect to composition.
Moreover, the assignment $a \mapsto L_{a}$ gives a homomorphism from $\paren{A, \bullet}$ to $\paren{\mathcal{L}, \circ}$.
\end{lemma}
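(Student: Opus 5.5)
The plan is to verify the two claims of the lemma directly from the $\B$--axiom. The key identity is that left translation by $a \bullet b = Bab$ is the composite $L_{a} \circ L_{b}$. Everything else follows from this.

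First, I prove this identity. For any $x \in A$, the $\B$--axiom gives
\[
L_{a \bullet b}\paren{x} = \paren{\paren{B a} b} x = a\paren{b x} = L_{a}\paren{L_{b}\paren{x}}.
\]
Hence $L_{a\bullet b} = L_{a} \circ L_{b}$ for all $a,b \in A$.

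Second, I show that $\mathcal{L}$ is a semigroup under composition. Composition of self-maps of $A$ is always associative, so the only thing to check is closure. Given $L_{a}, L_{b} \in \mathcal{L}$, the identity above shows $L_{a} \circ L_{b} = L_{Bab}$. Since $Bab \in A$, this lies in $\mathcal{L}$, so $\paren{\mathcal{L}, \circ}$ is a subsemigroup of the full transformation monoid on $A$.

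Third, the homomorphism property of $a \mapsto L_{a}$ from $\paren{A, \bullet}$ to $\paren{\mathcal{L}, \circ}$ is exactly the identity $L_{a \bullet b} = L_{a} \circ L_{b}$. Note that $\paren{A,\bullet}$ is only claimed to be a magma here. I would not attempt to prove that $\bullet$ is associative, since the statement does not require it; being a homomorphism of magmas into a semigroup suffices.

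There is no real obstacle in this lemma. The only point needing care is applying the left-associativity convention correctly, reading $Babx$ as $\paren{\paren{Ba}b}x$, so that the $\B$--axiom applies verbatim.
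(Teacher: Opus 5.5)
Your proof is correct and follows the paper's argument: the $\B$--identity gives $L_{a}\circ L_{b} = L_{Bab} = L_{a\bullet b}$, which yields both closure of $\mathcal{L}$ (so it is a subsemigroup of $A^{A}$) and the homomorphism property. Your remark that $\paren{A,\bullet}$ should be treated only as a magma is a sensible refinement, since the paper's phrase ``semigroup homomorphism'' tacitly presumes that $\bullet$ is associative, which is never established.
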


\begin{proof}
For all $L_{a}, L_{b} \in \mathcal{L}$, the B--identity $Babx = a\paren{bx}$ gives
\begin{equation}\label{eq:B-composition}\tag{$\dagger$}
L_{a} \circ L_{b} = L_{Bab} = L_{a\bullet b} \in \mathcal{L}.
\end{equation}
Hence $\mathcal{L}$ is a sub-semigroup of the semigroup $A^{A}$.
\prettyref{eq:B-composition} also shows that $a \mapsto L_{a}$ is a semigroup homomorphism.
\end{proof}

\begin{definition}
Let $\mathcal{A} = \paren{A, \ast}$ be a magma.
For $i \in \Nat$, define the subset $A_{i}$ of $A$ by
\begin{align*}
A_{0} &:= A, \\
A_{i+1} &:= A_{i}A_{i}.
\end{align*}
Put $A_{\infty} := \bigcap_{i\in\Nat} A_{i}$.
\end{definition}

\begin{lemma}
Let $\mathcal{A} = \paren{A, \ast, B}$ be a $\B$--algebra.
\begin{enumerate}
\item For each $i\in\Nat$, $A_{i+1} \subseteq A_{i}$.
\item For each $i\in\Nat$, $A_{i}$ is a left ideal of $\mathcal{A}$.
\item $A_{\infty}$ is a left ideal of $\mathcal{A}$.
\end{enumerate}
\end{lemma}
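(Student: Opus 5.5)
We prove the three items in order. Item (2) comes first, since item (1) follows from it and item (3) follows from (2) directly.

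For (2), we argue by induction on $i$. The case $A_{0} = A$ is trivial. Suppose $A_{i}$ is a left ideal, i.e.\ $aA_{i} \subseteq A_{i}$ for all $a\in A$, and let $a\in A$ and $uv \in A_{i+1}$ with $u,v\in A_{i}$. The $\B$--identity gives
\[
a\paren{uv} = \paren{\paren{Ba}u}v = \paren{\paren{Ba}u}\,v .
\]
This alone does not place the element in $A_{i}A_{i}$, because $\paren{Ba}u$ need not lie in $A_{i}$. The better route is to keep $u$ in front by writing
\[
a\paren{uv} = \paren{Bau}v .
\]
We need $Bau \in A_{i}$, so the induction hypothesis should be strengthened to give closure under the operation $\bullet$-style products on the left. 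Since $Bau = \paren{Ba}u$ and $u \in A_{i}$, the left ideal property of $A_{i}$ yields $\paren{Ba}u \in A_{i}$. Hence $a\paren{uv} = \paren{\paren{Ba}u}v \in A_{i}A_{i} = A_{i+1}$, with both factors in $A_{i}$. The only point needing care is that the left ideal hypothesis is applied to the left factor $Ba$ acting on $u\in A_{i}$, which is exactly what the induction hypothesis provides. No further strengthening is required, and I expect this to be the only step needing attention.

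For (1), we again use induction. The base case $A_{1} = AA \subseteq A = A_{0}$ is immediate. For the inductive step, each element of $A_{i+1} = A_{i}A_{i}$ has the form $uv$ with $u, v \in A_{i}$. By (2), $uv \in A_{i}$ because $A_{i}$ is a left ideal and $v\in A_{i}$. Hence $A_{i+1}\subseteq A_{i}$. This argument does not need the induction hypothesis for (1) at all.

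For (3), any intersection of left ideals is closed under left multiplication by elements of $A$, so it suffices to show that $A_{\infty}$ is nonempty, if that is required by the definition of a left ideal. If nonemptiness is not required, the claim is immediate. Otherwise we note that each $A_{i}$ is nonempty, since $A\neq\emptyset$ and $A_{i}A_{i}$ is nonempty whenever $A_{i}$ is, but a decreasing chain of nonempty sets may have empty intersection in the infinite case. In the finite case, the chain from (1) stabilises, so $A_{\infty} = A_{n}$ for some $n$ and is nonempty. For general $A$, I would adopt the convention that the empty set counts as a (trivial) left ideal, so that (3) holds simply as an intersection of left ideals.
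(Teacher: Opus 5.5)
Your proof is correct and is essentially the paper's argument. For (2) you use $a(uv)=((Ba)u)v$ with $(Ba)u\in A_i$ by the left-ideal hypothesis; (1) then follows from $A_{i+1}=A_iA_i\subseteq AA_i\subseteq A_i$, and (3) from intersecting left ideals. Delete the intermediate remark that $(Ba)u$ ``need not lie in $A_i$'' and the talk of strengthening the hypothesis: $(Bau)v$ and $((Ba)u)v$ are the same term, and the left-ideal hypothesis already settles the step. Your nonemptiness caveat for (3) is a point the paper does not address, and it is moot in the finite case, where the chain stabilises.
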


\begin{proof}
The property (3) follows from (1) and (2).
Let us verify (1) and (2) by induction on $i\in\Nat$.
The case where $i=0$ is trivial.
For $i > 0$, let $a \in A$ and $x\in A_{i}$.
Write $x$ as $x = yz$ for $y, z\in A_{i-1}$.
Then
\[
ax = a\paren{yz} = Bayz \in \paren{AA_{i-1}} A_{i-1}.
\]
By the induction hypothesis, we have
\[
\paren{AA_{i-1}} A_{i-1} \subseteq A_{i-1} A_{i-1} = A_{i}.
\]
Since $a$ and $x$ were arbitrary, it follows that $AA_{i} \subseteq A_{i}$.
Furthermore,
\[
A_{i} = A_{i-1}A_{i-1} \subseteq A A_{i-1} \subseteq A_{i-1}.\qedhere
\]
\end{proof}

\begin{corollary}\label{cor:idempotent-submagma}
Let $\mathcal{A} = \paren{A, \ast, B}$ be a finite $\B$--algebra.
Then $A_{\infty} A_{\infty} = A_{\infty}$.
\end{corollary}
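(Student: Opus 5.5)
The plan is to use the decreasing chain from the preceding lemma together with finiteness. By part (1) of that lemma, $A = A_{0} \supseteq A_{1} \supseteq A_{2} \supseteq \cdots$ is a decreasing chain of subsets of the finite set $A$. Such a chain must stabilise, so I will fix an $n \in \Nat$ with $A_{n} = A_{n+1} = A_{n+2} = \cdots$. Then $A_{\infty} = \bigcap_{i} A_{i} = A_{n}$.

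With this $n$, the statement is immediate from the recursive definition:
\[
A_{\infty} A_{\infty} = A_{n} A_{n} = A_{n+1} = A_{n} = A_{\infty}.
\]
This shows the required equality $A_{\infty}A_{\infty} = A_{\infty}$.

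There is essentially no obstacle; the only point to check is stabilisation, which is just the finiteness of the power set of $A$. The $\B$--element enters only through part (1) of the preceding lemma, which guarantees that the chain is decreasing. I also note that $A_{\infty}$ is non-empty: each $A_{i}$ is non-empty because $A_{0} \neq \emptyset$ and $A_{i+1} = A_{i}A_{i}$, so $A_{\infty} = A_{n} \neq \emptyset$.
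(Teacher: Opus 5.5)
Your proof is correct and matches the paper's argument: the descending chain from part (1) of the preceding lemma stabilises because $A$ is finite, so $A_{\infty} = A_{n}$ for some $n$. Then $A_{\infty}A_{\infty} = A_{n}A_{n} = A_{n+1} = A_{\infty}$.
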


\begin{proof}
Since $A$ is finite, the sequence $\paren{A_{i}}_{i\in\Nat}$ saturates, i.e., $A_{i} = A_{\infty}$ for all sufficiently large $i$.
Hence $A_{\infty} A_{\infty} = A_{i} A_{i} = A_{i+1} = A_{\infty}$.
\end{proof}

\begin{definition}[Eventual images]
Let $\mathcal{A} = \paren{A, \ast}$ be a magma.
For each $a \in A$, define the \emph{evential image} of $L_{a}$ by
\[
L_{a}^{\infty}\sqbra{A} := \bigcap_{i \in \Nat} L_{a}^{i}\sqbra{A},
\]
where $L_{a}^{i}$ is the $i$--fold composition of the left translation $L_{a}$.
\end{definition}

\begin{lemma}
Let $\mathcal{A} = \paren{A, \ast}$ be a magma.
\begin{enumerate}
\item $L_{a}^{i}\sqbra{A} \supseteq L_{a}^{i+1}\sqbra{A}$ for all $i\in\Nat$.
\item If $A$ is finite, $L_{a}^{i}\sqbra{A} = L_{a}^{\infty}\sqbra{A}$ for all sufficiently large $i\in\Nat$.
\item If $A$ is finite, $L_{a}\restriction L_{a}^{\infty}\sqbra{A} \colon L_{a}^{\infty}\sqbra{A}\to L_{a}^{\infty}\sqbra{A}$ is bijective.
\end{enumerate}
\end{lemma}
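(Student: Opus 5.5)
This is a statement about an arbitrary self-map $f := L_{a}$ of a set $A$, so we argue purely set-theoretically and use nothing about the magma structure. For (1), we proceed by induction on $i$. The case $i=0$ is $A \supseteq L_{a}\sqbra{A}$, which is trivial. For the inductive step, applying $L_{a}$ to both sides of $L_{a}^{i}\sqbra{A} \supseteq L_{a}^{i+1}\sqbra{A}$ preserves inclusion, since images are monotone. This gives $L_{a}^{i+1}\sqbra{A} \supseteq L_{a}^{i+2}\sqbra{A}$.

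For (2), assume $A$ is finite. By (1) the chain $L_{a}^{i}\sqbra{A}$ is decreasing in $i$. A decreasing chain of subsets of a finite set can strictly decrease only finitely often, so there is an $n$ with $L_{a}^{n}\sqbra{A} = L_{a}^{n+1}\sqbra{A}$. Applying $L_{a}$ repeatedly then propagates the equality: $L_{a}^{i}\sqbra{A} = L_{a}^{n}\sqbra{A}$ for all $i \geq n$. The intersection of the chain is therefore this stable set, i.e.\ $L_{a}^{\infty}\sqbra{A} = L_{a}^{n}\sqbra{A}$, and the same holds for every $i \geq n$.

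For (3), let $E := L_{a}^{\infty}\sqbra{A} = L_{a}^{n}\sqbra{A}$ with $n$ as in (2). Then
\[
L_{a}\sqbra{E} = L_{a}^{n+1}\sqbra{A} = E .
\]
This shows at once that the restriction $L_{a}\restriction E$ indeed maps $E$ into $E$, and that it is surjective onto $E$. Since $E$ is finite, a surjection from $E$ to itself is injective, hence bijective.

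The only point needing care is the stabilisation step in (2). One must check that a single equality $L_{a}^{n}\sqbra{A} = L_{a}^{n+1}\sqbra{A}$ persists to all later indices, which follows by applying $L_{a}$ to both sides. One must also identify the resulting stable set with the full intersection. Everything else is routine.
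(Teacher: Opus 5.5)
Your proof is correct and follows the paper's argument: the images $L_{a}^{i}[A]$ form a descending chain, finiteness makes the chain stabilise at some $n$ with $L_{a}^{\infty}[A]=L_{a}^{n}[A]$, and then $L_{a}[L_{a}^{\infty}[A]]=L_{a}^{n+1}[A]=L_{a}^{\infty}[A]$ gives surjectivity, hence bijectivity by finiteness. You also check explicitly that a single equality $L_{a}^{n}[A]=L_{a}^{n+1}[A]$ propagates to all later indices and that the stable set is the whole intersection; the paper leaves this implicit.
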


\begin{proof}
Since $L_{a}\sqbra{A}\subseteq A$, we have $L_{a}^{i+1}\sqbra{A}\subseteq L_{a}^{i}\sqbra{A}$ for every $i\in\Nat$.
If $A$ is finite, this descending chain eventually stabilises, i.e. $L_{a}^{i}\sqbra{A} = L_{a}^{\infty}\sqbra{A}$ for all sufficiently large $i\in\Nat$.
Then $L_{a}\sqbra{L_{a}^{\infty}\sqbra{A}} = L_{a}\sqbra{L_{a}^{i}\sqbra{A}} = L^{i+1}\sqbra{A} = L_{a}^{\infty}\sqbra{A}$.
Thus $L_{a} \restriction L_{a}^{\infty}\sqbra{A}$ is surjective, and therefore, bijective by $\abs{A} < \aleph_{0}$.
\end{proof}

\begin{lemma}\label{lem:Ainf-ideal-includes-stable-sets}
Let $\mathcal{A} = \paren{A, \ast, B}$ be a finite $\B$--algebra.
Then $\bigcup_{a\in A} L_{a}^{\infty}\sqbra{A} \subseteq A_{\infty}$.
\end{lemma}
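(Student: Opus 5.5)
Fix $a\in A$; we show $L_{a}^{\infty}\sqbra{A}\subseteq A_{i}$ for every $i\in\Nat$ by induction on $i$. Since $A_{\infty}=\bigcap_{i}A_{i}$, this gives the claim. The case $i=0$ is trivial because $A_{0}=A$.

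The central observation comes from \prettyref{lem:composition-homomorphism}. Powers of $L_{a}$ are again left translations: $L_{a}^{n}=L_{a^{\bullet n}}$, where $a^{\bullet n}$ is the $n$--fold $\bullet$--product $a\bullet a\bullet\cdots\bullet a$ (bracketing is irrelevant, since $\mathcal{L}$ is a semigroup under composition). For $n\geq 2$ we can split $L_{a}^{n}=L_{a}^{k}\circ L_{a}^{n-k}$. Hence every element of $L_{a}^{n}\sqbra{A}$ has the form $a^{\bullet k}\,\paren{L_{a}^{n-k}(y)}$ for some $y\in A$. So such an element is a product $uv$ with $u=a^{\bullet k}$ and $v\in L_{a}^{n-k}\sqbra{A}$.

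Now take $z\in L_{a}^{\infty}\sqbra{A}$ and assume $L_{a}^{\infty}\sqbra{A}\subseteq A_{i}$. We want $z\in A_{i+1}=A_{i}A_{i}$, so we need both factors of some factorisation $z=uv$ to lie in $A_{i}$.
\begin{itemize}
\item \emph{Right factor.} By the previous lemma, $L_{a}$ restricted to $L_{a}^{\infty}\sqbra{A}$ is a bijection. So $z=L_{a}^{k}(w)=a^{\bullet k}w$ with $w\in L_{a}^{\infty}\sqbra{A}\subseteq A_{i}$, for any $k\geq 1$.
\item \emph{Left factor.} We need $a^{\bullet k}\in A_{i}$ for a suitable $k$. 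Write $a^{\bullet k}=B\,a^{\bullet(k-1)}\,a$. This expresses $a^{\bullet k}$ as a product whose left factor $B\,a^{\bullet(k-1)}$ lies in $A_{1}$ (it is $B$ applied to something).
\end{itemize}
Iterating this only gives membership in $A_{1}$ cheaply. So I would strengthen the induction hypothesis to a simultaneous statement:
\[
L_{b}^{\infty}\sqbra{A}\subseteq A_{i}\quad\text{for all } b\in A,
\]
together with the claim that suitable powers $a^{\bullet k}$ lie in $A_{i}$.

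For the power claim, use finiteness of $(A,\bullet)$. The element $a$ has an idempotent power $e=a^{\bullet m}$ with $e\bullet e=e$. Then $L_{e}$ is an idempotent map whose image is $L_{a}^{\infty}\sqbra{A}$, when $m$ is a large enough multiple of the period. Now $z=e z$ for $z$ in the eventual image, and also $e=e\bullet e=Bee$.

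To get $e\in A_{i}$, one could instead work with $Be$. We have $L_{Be}(e)=Bee=e$, so $e$ is a fixed point of $L_{Be}$ and hence lies in $L_{Be}^{\infty}\sqbra{A}$. By the strengthened hypothesis applied to $b=Be$, this gives $e\in A_{i}$. Then $z=ez$ with $e,z\in A_{i}$, so $z\in A_{i+1}$.

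This closes the induction using only the hypothesis at level $i$ for all $b$. The key facts used are:
\begin{itemize}
\item fixed points of $L_{b}$ lie in $L_{b}^{\infty}\sqbra{A}$;
\item the eventual image of $L_{a}$ is the image of $L_{e}$ for the idempotent $e$, and $L_{e}$ fixes it pointwise.
\end{itemize}

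The main obstacle is verifying the second of these carefully: choosing $m$ so that $L_{a}^{m}$ is idempotent and has image exactly $L_{a}^{\infty}\sqbra{A}$. This is standard for a finite transformation semigroup, but it must be phrased through $L_{a^{\bullet m}}=L_{a}^{m}$. One must also make sure $e=Bee$ holds as an element identity, not merely as a map identity; it does, by the definition of $\bullet$.
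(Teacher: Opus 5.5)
\textbf{The gap.} The step that fails is taking an idempotent power $e=a^{\bullet m}$ with $e\bullet e=e$. That is the finite-semigroup fact, but $(A,\bullet)$ is only a magma. Lemma~\ref{lem:composition-homomorphism} gives $L_{(a\bullet b)\bullet c}=L_{a\bullet(b\bullet c)}$, not $(a\bullet b)\bullet c=a\bullet(b\bullet c)$; in the free $\mathbf{B}$--algebra, $\mathbf{B}(\mathbf{B}ab)c$ and $\mathbf{B}a(\mathbf{B}bc)$ are distinct normal forms. So ``bracketing is irrelevant'' holds for the map $L_{a^{\bullet n}}$ but not for the element $a^{\bullet n}$. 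Nothing you have established produces an element $e$ with $Bee=e$, let alone one with $L_e$ an idempotent power of $L_a$.

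Your closing justification, that $e=Bee$ holds ``by the definition of $\bullet$'', is circular. The definition gives $e\bullet e=Bee$; the identity $e\bullet e=e$ is exactly what is unproved. Since you derive $e\in A_i$ solely from $e$ being a fixed point of $L_{Be}$, the induction step does not close. Such an $e$ does exist a posteriori: the later results give $Bx\sim x$ for $x\in A_\infty$, whence $(xy)z=((Bx)y)z=x(yz)$ for $x,y\in A_\infty$, so $A_\infty$ is a finite semigroup and any idempotent in it satisfies $Bee=e$. But those results depend on the present lemma, so you cannot use them here. The obstacle you flagged, choosing $m$ at the level of maps, is harmless; the argument breaks at the level of elements.

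\textbf{The fix.} You need much less than an idempotent: some $u\in A_i$ with $L_u$ a positive power of $L_a$. Since $L_a$ permutes $L_a^{\infty}[A]$, every $z$ there is then $uw$ with $w\in L_a^{\infty}[A]\subseteq A_i$.

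The paper obtains $u$ from the fact, proved earlier in that subsection, that $A_i$ is a left ideal. If $x,y\in A_i$, then $x\bullet y=(Bx)y\in A_iA_i=A_{i+1}$. Hence the balanced powers $a_0=a$, $a_{i+1}=a_i\bullet a_i$ satisfy $a_i\in A_i$ and $L_{a_i}=L_a^{2^i}$. Then $L_a^{\infty}[A]=a_iL_a^{\infty}[A]\subseteq A_iA_i=A_{i+1}$, with no strengthened hypothesis needed. Your remark that iterating ``only gives $A_1$'' is correct for the one-sided nesting $B\,a^{\bullet(k-1)}\,a$; the doubling bracketing is what puts both factors in $A_i$.

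Alternatively, you can keep your strengthened induction and replace $e$ by $u:=L_{Ba}^{N}(a)=a\bullet(a\bullet(\cdots\bullet a))$, with $N$ past the stabilisation index of $L_{Ba}$. Then $u\in L_{Ba}^{\infty}[A]\subseteq A_i$ by the hypothesis for $b=Ba$, and $L_u=L_a^{N+1}$.
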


\begin{proof}
Let $a \in A$.
For $i\in\Nat$, define $a_{i} \in A_{i}$ by
\begin{align*}
a_{0} &:= a, \\
a_{i+1} &:= Ba_{i}a_{i} \paren{= a_{i} \circ a_{i}}.
\end{align*}
By applying \prettyref{lem:composition-homomorphism} repeatedly, we have
\[
L_{a_{i}} = L_{a}^{2^{i}}.
\]
Since the map $L_{a}$ is a bijection on $L_{a}^{\infty}\sqbra{A}$, the map $L_{a_{i}}$ is likewise a bijection on $L_{a}^{\infty}\sqbra{A}$.
So
\[
L_{u_{i}}\sqbra{L_{a}^{\infty}\sqbra{A}} = L_{a}^{\infty}\sqbra{A},
\]

Now, we show that $L_{a}^{\infty}\sqbra{A} \subseteq A_{i}$ by induction on $i$.
The case where $i=0$ is trivial.
For $i>0$, since $L_{a}^{\infty}\sqbra{A} \subseteq A_{i}$ by the induction hypothesis,
\[
L_{a}^{\infty}\sqbra{A} = a_{i} L_{a}^{\infty}\sqbra{A} \subseteq A_{i} A_{i} = A_{i+1}. \qedhere
\]
\end{proof}

\subsection{Extensional equivalence}

We introduce the extensional equivalence $\sim$ on $A$ relative to $A_{\infty}$.
For finite $\B$--algebras, the resulting relation is a congruence on $A_{\infty}$, and every equivalence class has a representative in $A_{\infty}$.

\begin{definition}[Extensional equivalences]
Let $\mathcal{A} = \paren{A, \ast}$ be a magma.
For $n\in\Nat$, define the equivalence relation $\sim_{n}$ on $A$ by
\[
a \sim_{n} b \iff a x_{1} \cdots x_{n} = b x_{1} \cdots x_{n}\qquad\text{for all }x_{1},\ldots,x_{n} \in A_{\infty}.
\]
Equivalently, the relations $\sim_{0}, \sim_{1}, \ldots$ can also be defined by induction on $n$:
\begin{align*}
a \sim_{0} b &\iff a = b, \\
a \sim_{n+1} b &\iff ax \sim_{n} bx \qquad\text{for all }x\in A_{\infty}.
\end{align*}
\end{definition}

\begin{definition}[$\beta$--map]
Let $\mathcal{A} = \paren{A, \ast, B}$ be a $\B$--algebra.
Define the map $\beta\colon A/\sim_{1} \to A/\sim_{2}$ by
\[
\beta \sqbra{a} := \sqbra{L_{B}\paren{a}} = \sqbra{Ba}.
\]
\end{definition}

\begin{lemma}\label{lem:extensional-equivalences-and-beta}
Let $\mathcal{A} = \paren{A, \ast, B}$ be a finite $\B$--algebra.
All of ${\sim_{1}}, {\sim_{2}}, \ldots$ are equivalent to each other.
Moreover, the map $\beta$ is a bijection on $A/{\sim}$.
\end{lemma}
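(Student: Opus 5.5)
The plan is to compare the partitions $A/{\sim_{1}}$ and $A/{\sim_{2}}$ by cardinality, using $\beta$ as an injection, and then propagate the resulting equality $\sim_{1} = \sim_{2}$ upward by induction.

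First I record the easy refinement: $a \sim_{n} b$ implies $a \sim_{n+1} b$. Indeed, if $a x_{1}\cdots x_{n} = b x_{1}\cdots x_{n}$, then right-multiplying by any $x_{n+1}\in A_{\infty}$ gives $a x_{1}\cdots x_{n+1} = b x_{1}\cdots x_{n+1}$. Hence each $\sim_{n+1}$ is coarser than $\sim_{n}$, and in particular $\abs{A/{\sim_{2}}} \leq \abs{A/{\sim_{1}}}$.

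Next I check that $\beta$ is well defined and injective. Both directions use the $\B$--identity $Baxy = a\paren{xy}$ together with \prettyref{cor:idempotent-submagma}, i.e. $A_{\infty}A_{\infty} = A_{\infty}$.
\begin{enumerate}
\item Well-definedness: suppose $a \sim_{1} b$ and $x,y\in A_{\infty}$. Then $xy\in A_{\infty}$, so $Baxy = a\paren{xy} = b\paren{xy} = Bbxy$, giving $Ba \sim_{2} Bb$.
\item Injectivity: suppose $Ba \sim_{2} Bb$. Then $a\paren{xy} = b\paren{xy}$ for all $x,y\in A_{\infty}$. Since every $z\in A_{\infty}$ can be written as $z = xy$ with $x,y\in A_{\infty}$, we get $az = bz$ for all $z\in A_{\infty}$, i.e. $a \sim_{1} b$.
\end{enumerate}

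So $\beta$ is an injection $A/{\sim_{1}} \to A/{\sim_{2}}$, and therefore $\abs{A/{\sim_{1}}} \leq \abs{A/{\sim_{2}}}$. Combined with the reverse inequality, the two finite quotients have the same size. A coarsening of a partition of a finite set with the same number of blocks must be equal to it, so $\sim_{1} = \sim_{2}$. This counting step is the only place finiteness enters beyond \prettyref{cor:idempotent-submagma}, and it is the main point to get right.

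Finally I induct using the recursive characterisation. Assume $\sim_{n} = \sim_{n+1}$. Then
\[
a \sim_{n+2} b \iff \forall x\in A_{\infty}\; ax \sim_{n+1} bx \iff \forall x\in A_{\infty}\; ax \sim_{n} bx \iff a \sim_{n+1} b .
\]
Hence $\sim_{1} = \sim_{2} = \cdots$; write $\sim$ for this common relation. The map $\beta$ is then an injective self-map of the finite set $A/{\sim}$, and so it is a bijection.
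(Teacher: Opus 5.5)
Your proof is correct and follows the paper's route. The $\B$--identity together with $A_{\infty}A_{\infty} = A_{\infty}$ gives $a \sim_{1} b \iff Ba \sim_{2} Bb$; comparing the cardinalities of the two quotients then yields ${\sim_{1}} = {\sim_{2}}$; and induction via the recursive characterisation handles the higher indices.

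Your version is slightly tidier than the paper's in two places. You make explicit the refinement ${\sim_{1}} \subseteq {\sim_{2}}$, which the paper's counting step uses without stating it. You also conclude that $\beta$ is bijective only after identifying $A/{\sim_{1}}$ with $A/{\sim_{2}}$, so the ``injective self-map of a finite set'' argument is applied to a genuine self-map.
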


\begin{proof}
It suffices to show that ${\sim_{1}} = {\sim_{2}}$, as the general case can be derived from this case by induction.
Let $a, b\in A$. Since $A_{\infty}A_{\infty} = A_{\infty}$ by \prettyref{cor:idempotent-submagma}, we have
\begin{align*}
a \sim_{1} b &\iff ax = bx \qquad\text{for all }x\in A_{\infty} \\
&\iff a\paren{xy} = b\paren{xy} \qquad\text{for all }x,y\in A_{\infty} \\
&\iff Ba xy = Bb xy \qquad\text{for all }x,y\in A_{\infty} \\
&\iff Ba \sim_{2} Bb.
\end{align*}
Hence the map $\beta$ is injective, and therefore, bijective by $\abs{A} < \aleph_{0}$.
Hence
\[
\abs{A / {\sim_{1}}} = \abs{A / {\sim_{2}}}.
\]
It follows that ${\sim_{1}} = {\sim_{2}}$ by $\abs{A} < \aleph_{0}$.
\end{proof}

For a finite $\B$--algebra and $i \geq 1$, we omit the subscript $i$ of $\sim_{i}$.

\begin{lemma}
Let $\mathcal{A} = \paren{A, \ast, B}$ be a finite $\B$--algebra.
For every $a \in A$, there exists an $a_{\infty} \in A_{\infty}$ such that $a \sim a_{\infty}$.
\end{lemma}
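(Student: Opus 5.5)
The plan is to take $a_{\infty} := B a e = a \bullet e$ for a suitable $e \in A_{\infty}$ whose left translation is the identity on $A_{\infty}$. Two facts make this work.

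\emph{Membership.} Since $A_{\infty}$ is a left ideal, $Bae = \paren{Ba}e \in A\,A_{\infty} \subseteq A_{\infty}$.

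\emph{Equivalence.} For every $x \in A_{\infty}$ we have $Baex = a\paren{ex}$. So if $ex = x$ on $A_{\infty}$, then $a_{\infty} x = a x$ for all $x\in A_{\infty}$, which is exactly $a \sim_{1} a_{\infty}$. By \prettyref{lem:extensional-equivalences-and-beta} this is the same as $a\sim a_{\infty}$.

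So everything reduces to the following claim: there is an $e\in A_{\infty}$ with $ex = x$ for all $x\in A_{\infty}$.

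To find $e$, I work in the finite semigroup
\[
\mathcal{S} := \set{ L_{c}\restriction A_{\infty} | c \in A_{\infty} }
\]
under composition. It is closed because $L_{c}\circ L_{d} = L_{Bcd}$ by \prettyref{lem:composition-homomorphism}, and $Bcd = \paren{Bc}d \in A_{\infty}$. By \prettyref{cor:idempotent-submagma}, $A_{\infty} = A_{\infty}A_{\infty}$, so the images of the members of $\mathcal{S}$ jointly cover $A_{\infty}$.

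The idempotents of $\mathcal{S}$ come from the construction in the proof of \prettyref{lem:Ainf-ideal-includes-stable-sets}. Starting from any $c$, the iterates $c_{i+1} := Bc_{i}c_{i}$ satisfy $L_{c_{i}} = L_{c}^{2^{i}}$, and for large $i$ they lie in $A_{\infty}$. Hence a suitable iterate gives an idempotent $f = L_{c_{N}}\restriction A_{\infty} \in \mathcal{S}$, which is the identity on its image $L_{c}^{\infty}\sqbra{A}\cap A_{\infty}$.

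The main obstacle is upgrading such an idempotent to the identity on all of $A_{\infty}$. Here I would use the bijectivity of $\beta$ from \prettyref{lem:extensional-equivalences-and-beta}. Since $A/{\sim}$ is finite, some power $\beta^{k}$ with $k\geq 1$ is the identity, which gives $B^{k}a \sim a$ for every $a$.

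I would first try a counting argument: show that $|A_{\infty}/{\sim}| = |A/{\sim}|$ directly, by exhibiting an injection $A/{\sim}\to A_{\infty}/{\sim}$. The natural candidate is built from $\beta$ together with right multiplication by elements of $A_{\infty}$. Two compatibilities make this plausible.

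\emph{Left multiplication.} The relation $\sim$ respects it: $a\sim b$ implies $ca\sim cb$.

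\emph{Right multiplication by $A_{\infty}$.} If $a\sim b$ and $y\in A_{\infty}$, then $ay\sim by$, since ${\sim} = {\sim_{2}}$.

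If such an injection exists, the inclusion $A_{\infty}/{\sim}\to A/{\sim}$ is surjective, which is exactly the statement, and the element $e$ is not needed at all.

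If the counting route fails, I fall back on the claim about $e$. The approach there is to combine the idempotents $L_{c_{N}}$ over all $c\in A_{\infty}$, using that their images cover $A_{\infty}$. From these I would produce a single element of $A_{\infty}$ acting as the identity, and then apply $a\mapsto Bae$ as above.
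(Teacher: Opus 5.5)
Your reduction is sound: if some $e\in A_{\infty}$ satisfies $ex=x$ for all $x\in A_{\infty}$, then $Bae\in A_{\infty}$ and $Baex=a\paren{ex}=ax$, so $a\sim Bae$. The gap is that you never produce such an $e$, and you never produce the injection $A/{\sim}\to A_{\infty}/{\sim}$ either. Both routes stop at ``the natural candidate is built from\ldots'' and ``from these I would produce\ldots'', and that step is the entire content of the lemma.

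The $e$--route is also the harder direction. Such an $e$ does exist: a representative $I\in A_{\infty}$ of the $\I$--element of $\mathcal{Q}$ satisfies $I\paren{yz}=BIyz=Iyz=yz$ for $y,z\in A_{\infty}$. But that argument uses the $\B$--element $\sqbra{B_{\infty}}$ of $\mathcal{Q}$ and the fact that $\beta=\id$ on $\mathcal{Q}$. Obtaining $B_{\infty}\sim B$ is exactly the present lemma, so the route is circular.

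Your idempotent step is also false as stated. The doubling iterates only yield $L_{c}^{2^{i}}$, which is never idempotent if $L_{c}$ has a $3$--cycle on its eventual image. For example, take the group $\mathbb{Z}/3$ with $B=0$ and $c=1$. You would need $L_{c}^{n}$ with $n$ a multiple of the period.

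The missing idea is one you already wrote down. Since $\beta$ permutes the finite set $A/{\sim}$, some $\beta^{k}$ with $k\geq 1$ is the identity, so $a\sim L_{B}^{jk}\paren{a}$ for every $j\in\Nat$. Now apply $\bigcup_{c\in A}L_{c}^{\infty}\sqbra{A}\subseteq A_{\infty}$ (Lemma~\ref{lem:Ainf-ideal-includes-stable-sets}) to $c=B$. There is an $m$ with $L_{B}^{n}\sqbra{A}=L_{B}^{\infty}\sqbra{A}\subseteq A_{\infty}$ for all $n\geq m$. Taking $jk\geq m$, the element $a_{\infty}:=L_{B}^{jk}\paren{a}$ lies in $A_{\infty}$ and satisfies $a\sim a_{\infty}$, with no need for $e$ or for any counting.

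This is essentially the paper's proof, which phrases it through surjectivity of $\beta$. It picks $a_{m}$ with $\beta^{m}\sqbra{a_{m}}=\sqbra{a}$ and sets $a_{\infty}:=L_{B}^{m}\paren{a_{m}}$.
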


\begin{proof}
By \prettyref{lem:Ainf-ideal-includes-stable-sets}, we have $L^{\infty}_{B}\sqbra{A} \subseteq A_{\infty}$.
Since $A$ is finite, $L^{\infty}_{B}\sqbra{A} = L^{m}_{B}\sqbra{A}$ holds for some $m \in \Nat$.

Let $a_{0} \in A$.
Since $\beta$ is surjective by \prettyref{lem:extensional-equivalences-and-beta}, there exist $a_{1}, a_{2}, \ldots, a_{m} \in A$ such that $\sqbra{a_{i}} = \beta \sqbra{a_{i+1}}$ for all $0 \leq i < m$.
Hence $\sqbra{a_{0}} = \beta^{m} \sqbra{a_{m}} = \sqbra{L_{B}^{m}\paren{a_{m}}}$ and $L_{B}^{m}\paren{a_{m}} \in A_{\infty}$.
\end{proof}

\begin{lemma}\label{lem:Ainf-represents-A}
Let $\mathcal{A} = \paren{A, \ast, B}$ be a finite $\B$--algebra.
The equivalence relation $\sim$ is a congruence on the subalgebra $A_{\infty}$.
\end{lemma}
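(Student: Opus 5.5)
We need to show: if $a\sim a'$ and $b\sim b'$ with $a,a',b,b'\in A_{\infty}$, then $ab\sim a'b'$. Here $\sim$ is $\sim_{1}$, which by \prettyref{lem:extensional-equivalences-and-beta} coincides with every $\sim_{n}$ for $n\geq 1$. Closure of $A_{\infty}$ under multiplication is \prettyref{cor:idempotent-submagma}, so $A_{\infty}$ is indeed a submagma and the quotient makes sense. I will split the compatibility into two halves, left compatibility and right compatibility, and then compose them.

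\emph{Left compatibility:} if $a\sim a'$ then $ab\sim a'b$ for any $b\in A_{\infty}$. Since $a\sim_{2} a'$, we have $abx = a'bx$ for all $x\in A_{\infty}$ (with $b\in A_{\infty}$ used as the first argument). This says exactly that $ab\sim_{1} a'b$. This half needs only the collapse ${\sim_{1}}={\sim_{2}}$.

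\emph{Right compatibility:} if $b\sim b'$ with $b,b'\in A_{\infty}$, then $ab\sim ab'$ for any $a$. Let $x\in A_{\infty}$. I want $abx = ab'x$, but the term $(ab)x$ is not associated in a way that lets me use $b\sim b'$ directly. The $\B$--axiom gives $B a b x = a(bx)$, which is the wrong shape. So I will pass through the $\beta$--map. By \prettyref{lem:extensional-equivalences-and-beta}, $\beta$ is surjective on classes, so there is $c$ with $Bc\sim a$. Then
\[
abx = (Bc)bx = c(bx) = c(b'x) = (Bc)b'x = ab'x,
\]
where the middle equality uses $bx=b'x$ (valid since $b\sim_{1} b'$ and $x\in A_{\infty}$). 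The outer equalities use $Bc\sim_{2} a$ applied to $b,x\in A_{\infty}$, respectively $b',x\in A_{\infty}$; this is exactly where I need $b,b'\in A_{\infty}$. Hence $ab\sim ab'$.

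Combining the two halves, $ab\sim a'b\sim a'b'$, so $\sim$ is a congruence on $A_{\infty}$. The main obstacle is right compatibility, since it requires rewriting $a$ up to $\sim$ as a $\B$--image, and this relies on the surjectivity of $\beta$ together with the equality ${\sim_{1}}={\sim_{2}}$. If that route had failed, I would instead have used the preceding lemma to choose representatives in $L_{B}^{m}\sqbra{A}$, but the $\beta$ argument appears sufficient.
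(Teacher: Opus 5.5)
Your proof is correct and follows the paper's argument essentially verbatim. Left compatibility comes from ${\sim_{1}} = {\sim_{2}}$, and right compatibility comes from using the surjectivity of $\beta$ to write $a \sim_{2} Bc$ and then computing $abx = (Bc)bx = c(bx) = c(b'x) = (Bc)b'x = ab'x$ for $x \in A_{\infty}$. You also spell out the final chaining $ab \sim a'b \sim a'b'$, which the paper leaves implicit, and your first-half conclusion $ab \sim_{1} a'b$ corrects a typo in the paper, which writes $ab \sim_{1} ab'$ there.
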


\begin{proof}
Let $a, a', b\in A_{\infty}$ and suppose that $a \sim a'$.
Since $a \sim_{2} a'$ by \prettyref{lem:extensional-equivalences-and-beta},
\[
a b x = a' b x \qquad \text{for all }x\in A_{\infty}.
\]
Hence $ab \sim_{1} ab'$.

Next, let $a, b, b' \in A_{\infty}$ and suppose that $b \sim b'$.
Since the map $\beta\colon A/{\sim_{1}} \to A/{\sim_{2}}$ is surjective by \prettyref{lem:extensional-equivalences-and-beta}, one can find an $a' \in A$ such that
\[
a \sim_{2} Ba'.
\]
Then, for every $x \in A_{\infty}$,
\begin{align*}
a b x &= \paren{Ba'} b x \\
&= a' \paren{bx} \\
&= a' \paren{b' x} \\
&= \paren{Ba'} b' x \\
&= a b' x.
\end{align*}
Hence $a b \sim a b'$.
\end{proof}

\subsection{Quotient algebra \texorpdfstring{$A_{\infty}/{\sim}$}{A\_infty/sim}}

We show that the quotient $\mathcal{Q} := A_{\infty}/{\sim}$ is a semigroup with a left unit and that $\beta$ is the identity on this quotient.
For finite $\BM$--algebras, we use this structure to identify the eventual image of each left translation with its set of fixed points.

\begin{definition}
Let $\mathcal{A} = \paren{A, \ast, B}$ be a finite $\B$--algebra.
Denote the quotient magma of $A_{\infty}$ modulo $\sim$ by $\mathcal{Q}$.
\end{definition}

\begin{theorem}\label{thm:Q-is-BI-algebra}
Let $\mathcal{A} = \paren{A, \ast, B}$ be a finite $\B$--algebra.
The magma $\mathcal{Q}$ has $\B$-- and $\I$--elements.
Moreover, the map $\beta$ is an identity map on $\mathcal{Q}$.
Furthermore, if $\mathcal{A}$ has an $\M$--element, then so does $\mathcal{Q}$.
\end{theorem}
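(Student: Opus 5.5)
Throughout, let $\mathcal{A} = \paren{A, \ast, B}$ be a finite $\B$--algebra, and write $[a]$ for the $\sim$--class of $a$.

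\textbf{Step 1: $\mathcal{Q}$ is a $\BI$--algebra.} I first produce a $\B$--element of $\mathcal{Q}$. Take $b \in A_{\infty}$ with $b \sim B$, which exists by the lemma giving representatives in $A_{\infty}$. For $x, y, z \in A_{\infty}$, the equivalence $b \sim_{3} B$ (the relations $\sim_{n}$ coincide by \prettyref{lem:extensional-equivalences-and-beta}) gives $bxyz = Bxyz = x\paren{yz}$. Hence $[b]$ is a $\B$--element of $\mathcal{Q}$.

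For an $\I$--element, I use the bijectivity of $\beta$. Let $[i_{0}] := \beta^{-1}[B]$ (as classes in $A/{\sim}$), so that $Bi_{0} \sim B$. Then, for $x, y \in A_{\infty}$,
\[
x y = B x y = B i_{0} x y = i_{0}\paren{xy}.
\]
Since $A_{\infty}A_{\infty} = A_{\infty}$ by \prettyref{cor:idempotent-submagma}, this says $i_{0} z = z$ for all $z \in A_{\infty}$. Choosing $i \in A_{\infty}$ with $i \sim i_{0}$ gives $iz = z$ on $A_{\infty}$, so $[i]$ is an $\I$--element of $\mathcal{Q}$. The operation on $\mathcal{Q}$ is well defined by \prettyref{lem:Ainf-represents-A}.

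\textbf{Step 2: $\beta$ is the identity on $\mathcal{Q}$.} Since $\mathcal{Q}$ is finite, \prettyref{prop:finite-bi-is-semigrp-w-left-unit} applies: $\mathcal{Q}$ is a semigroup in which $[b]$ is a left unit. So for $a, x \in A_{\infty}$,
\[
Ba x \sim b a x = a x,
\]
where I read $bax$ as the product $[b][a][x]$ in $\mathcal{Q}$. This gives $Bax = ax$ for all $x$ in $A_{\infty}$ (up to $\sim_{0}$ after applying further arguments). Hence $Ba \sim a$, that is, $\beta[a] = [a]$.

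The one point needing care is that $Ba$ need not lie in $A_{\infty}$. I handle this by working with $\sim$ on all of $A$ and using $b \sim B$: for $x, y \in A_{\infty}$ we have $Baxy = bxy$ evaluated pointwise. The identity $(bax)y = (ax)y$ holds in $A$, because it holds in $\mathcal{Q}$ as an equality of classes and class equality means agreement after applying one more argument from $A_{\infty}$. This yields $Ba \sim_{2} a$, so $\beta[a] = [a]$.

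\textbf{Step 3: transferring an $\M$--element.} Suppose $M$ is an $\M$--element of $\mathcal{A}$. Take $m \in A_{\infty}$ with $m \sim M$. For $x, y \in A_{\infty}$,
\[
m x y = M x y = x x y,
\]
so $[m][x] = [x][x]$ in $\mathcal{Q}$. Hence $[m]$ is an $\M$--element of $\mathcal{Q}$.

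I expect the main obstacle to be the bookkeeping in Step 2: keeping track of where elements live ($A$ versus $A_{\infty}$) and making sure every equation in $\mathcal{Q}$ is certified by applying arguments drawn from $A_{\infty}$.
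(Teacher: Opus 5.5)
There is a genuine gap in your construction of the $\I$--element in Step~1, and Step~2 depends on it. In the chain
\[
xy = Bxy = Bi_{0}xy = i_{0}\paren{xy}
\]
the first equality has no justification. The $\B$--axiom gives $Bxyz = x\paren{yz}$, not $Bxy = xy$.

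Worse, for $x, y \in A_{\infty}$ the identity $Bxy = xy$ is exactly the statement $Bx \sim_{1} x$. That is, it says $\beta\sqbra{x} = \sqbra{x}$ for all $x \in A_{\infty}$, which is the second assertion of the theorem. In Step~2 you derive that assertion from Proposition~\ref{prop:finite-bi-is-semigrp-w-left-unit}, and that proposition requires $\mathcal{Q}$ to already have an $\I$--element. So the argument is circular. Without the first equality, your choice $\sqbra{i_{0}} := \beta^{-1}\sqbra{B}$ only yields $i_{0}\paren{xy} = Bxy$, which does not make $\sqbra{i_{0}}$ a left unit.

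The missing idea is to obtain the $\I$--element from the finiteness of a permutation rather than from an inverse image.
\begin{enumerate}
\item Since $b \sim_{1} B$, we have $bx = Bx$ for $x \in A_{\infty}$. Hence the left translation $L_{\sqbra{b}}$ of $\mathcal{Q}$ is just $\beta$.
\item Every $\sim$--class meets $A_{\infty}$, so $\beta$ is a permutation of the finite set $\mathcal{Q}$. Hence $L_{\sqbra{b}}^{n} = \id$ for some $n \geq 1$.
\item Apply Lemma~\ref{lem:composition-homomorphism} inside the $\B$--algebra $\paren{\mathcal{Q}, \sqbra{b}}$. It gives $L_{\sqbra{b}}^{n} = L_{e}$, where $e$ is any bracketing of the $n$--fold $\bullet$--product of $\sqbra{b}$. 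So $e$ is an $\I$--element.
\end{enumerate}
With this in place, your Steps~2 and~3 go through.

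The bookkeeping you worry about in Step~2 is also unnecessary. For $a \in A_{\infty}$, $b \sim_{1} B$ and the left-ideal property give $Ba = ba \in A_{\infty}$. Hence $\beta\sqbra{a} = \sqbra{b}\sqbra{a} = \sqbra{a}$ at once.
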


\begin{proof}
By \prettyref{lem:Ainf-represents-A}, there exists a $B_{\infty} \in A_{\infty}$ such that $B_{\infty} \sim B$.
Since $B_{\infty} \sim_{1} B$,
\[
B_{\infty} x y z = B x y z = x \paren{y z} \qquad \text{for all }x,y,z\in A_{\infty}.
\]
Hence $\sqbra{B_{\infty}} \in A_{\infty} / {\sim}$ is a $\B$--element of $\mathcal{Q}$.

The left translation $L_{\sqbra{B_{\infty}}} \colon A_{\infty}/{\sim} \to A_{\infty}/{\sim}$ by $\sqbra{B_{\infty}}$ coincides with $\beta$ on $\mathcal{Q}$:
\[
L_{\sqbra{B_{\infty}}} \sqbra{x} = \sqbra{B_{\infty}x} = \sqbra{Bx} = \beta\sqbra{x}\qquad\text{for all }x\in A_{\infty}.
\]
Hence $L_{\sqbra{B_{\infty}}}$ is bijective by \prettyref{lem:extensional-equivalences-and-beta}.
Since $A_{\infty}/{\sim}$ is finite, there exists an $n \geq$ such that $L_{\sqbra{B_{\infty}}}^{n} = \id_{A_{\infty}/{\sim}}$.
Let $I \in A_{\infty}/{\sim}$ be any element obtained by inserting parentheses in an arbitrary way into
\[
\underbrace{\sqbra{B_{\infty}} \bullet \cdots \bullet \sqbra{B_{\infty}}}_{n}.
\]
By \prettyref{lem:composition-homomorphism}, $L_{I} = L_{\sqbra{B_{\infty}}}^{n} = \id_{A_{\infty}/{\sim}}$, i.e., $I$ is an $\I$--element of $\mathcal{Q}$.

By \prettyref{prop:finite-bi-is-semigrp-w-left-unit}, $\mathcal{Q}$ is a semigroup with $\sqbra{B_{\infty}}$ as its left unit, i.e.,
\[
\beta \sqbra{x} = \sqbra{Bx} = \sqbra{B_{\infty} x} = \sqbra{B_{\infty}} \sqbra{x} = \sqbra{x} \qquad\text{for all }x\in A_{\infty}.
\]
Therefore, $\beta$ is an identity map on $\mathcal{Q}$.

Now, suppose that $\mathcal{A}$ has an $\M$--element.
Choose an $M_{\infty} \in A_{\infty}$ such that $M_{\infty} \sim M$ by \prettyref{lem:Ainf-represents-A}.
Then $\sqbra{M_{\infty}}$ is an $\M$--element of $\mathcal{Q}$.
\end{proof}

\begin{corollary}\label{cor:eventual-image-fixed-point}
Let $\mathcal{A} = \paren{A, \ast, B, M}$ be a finite $\BM$--algebra.
For every $a \in A$, the set $L_{a}^{\infty}\sqbra{A}$ is precisely the set of all fixed points of $L_{a}$.
\end{corollary}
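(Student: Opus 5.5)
The plan is to show that, in a finite $\BM$--algebra, the left translation $L_{a}$ is idempotent when restricted to $A_{\infty}$: $a\paren{aw} = aw$ for all $w\in A_{\infty}$. Combined with \prettyref{lem:Ainf-ideal-includes-stable-sets}, this gives the claim.

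First the easy inclusion. If $ax = x$, then $x = L_{a}^{i}\paren{x}\in L_{a}^{i}\sqbra{A}$ for every $i$, so $x\in L_{a}^{\infty}\sqbra{A}$.

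For the converse, I would work in $\mathcal{Q}$. By \prettyref{thm:Q-is-BI-algebra}, $\mathcal{Q}$ is a finite magma with $\B$--, $\M$-- and $\I$--elements, so it is a finite $\BMI$--algebra. By \prettyref{thm:finite-bmi-is-idempotent}, $\mathcal{Q}$ is an idempotent semigroup in which $\sqbra{B_{\infty}}$ is a left unit. Fix $a\in A$ and choose $a_{\infty}\in A_{\infty}$ with $a\sim a_{\infty}$, so that $aw = a_{\infty}w$ for every $w\in A_{\infty}$.

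The key computation is as follows. For $w\in A_{\infty}$ we have $a_{\infty}w\in A_{\infty}$, since $A_{\infty}$ is a left ideal, and therefore
\[
a\paren{aw} = a_{\infty}\paren{a_{\infty}w} = B a_{\infty} a_{\infty} w = B_{\infty} a_{\infty} a_{\infty} w .
\]
The last equality uses $B\sim B_{\infty}$, where $\sim$ coincides with $\sim_{3}$ by \prettyref{lem:extensional-equivalences-and-beta}. Next, $B_{\infty}a_{\infty}$ and $B_{\infty}a_{\infty}a_{\infty}$ lie in $A_{\infty}$, and in $\mathcal{Q}$ we have
\[
\sqbra{B_{\infty}a_{\infty}a_{\infty}} = \sqbra{B_{\infty}}\sqbra{a_{\infty}}\sqbra{a_{\infty}} = \sqbra{a_{\infty}}\sqbra{a_{\infty}} = \sqbra{a_{\infty}},
\]
using associativity, the left unit $\sqbra{B_{\infty}}$, and idempotence. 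Hence $B_{\infty}a_{\infty}a_{\infty}\sim a_{\infty}$. Unwinding $\sim_{1}$ at $w\in A_{\infty}$ gives $B_{\infty}a_{\infty}a_{\infty}w = a_{\infty}w = aw$. Together with the display above, this yields $a\paren{aw} = aw$ for all $w\in A_{\infty}$.

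Finally, let $x\in L_{a}^{\infty}\sqbra{A}$. Since $L_{a}$ restricts to a bijection of $L_{a}^{\infty}\sqbra{A}$, we can write $x = ay$ with $y\in L_{a}^{\infty}\sqbra{A}$. By \prettyref{lem:Ainf-ideal-includes-stable-sets}, $y\in A_{\infty}$, so $ax = a\paren{ay} = ay = x$. Thus $x$ is a fixed point of $L_{a}$.

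The main obstacle is passing from the equation $\sqbra{a}\sqbra{a}=\sqbra{a}$, which holds only up to $\sim$ in $\mathcal{Q}$, to a genuine equality in $A$. The way around it is to read the $\sim$--relation $B_{\infty}a_{\infty}a_{\infty}\sim a_{\infty}$ as an exact identity after applying both sides to an argument $w\in A_{\infty}$. The remaining care is only in checking that every intermediate element lies in $A_{\infty}$, which follows from the left-ideal property, and that $\sim$ may be used at depth $3$.
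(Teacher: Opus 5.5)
Your proof is correct and follows essentially the same route as the paper. Both arguments write $x = ay$ with $y \in L_{a}^{\infty}\sqbra{A} \subseteq A_{\infty}$, replace $a$ by a representative $a_{\infty} \in A_{\infty}$, and use that $\mathcal{Q}$ is an idempotent semigroup with left unit $\sqbra{B_{\infty}}$ to get $a\paren{ay} = ay$. The only cosmetic difference is that the paper uses the two equivalences $Ba_{\infty} \sim a_{\infty}$ (i.e.\ $\beta = \id$ on $\mathcal{Q}$) and $a_{\infty}a_{\infty} \sim a_{\infty}$ separately, whereas you merge them into $B_{\infty}a_{\infty}a_{\infty} \sim a_{\infty}$ via $B \sim_{3} B_{\infty}$ (and that step needs only the left unit and idempotence, not associativity).
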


\begin{proof}
Clearly each fixed point of $L_{a}$ belongs to $L_{a}^{\infty}$.
It suffices to show that each point in $L_{a}^{\infty}$ is a fixed point of $L_{a}$.

Now, let $x \in L_{a}^{\infty}$.
By \prettyref{lem:Ainf-ideal-includes-stable-sets}, $x \in A_{\infty}$.
Since $L_{a}$ is a bijection on $L_{a}^{\infty} \subseteq A_{\infty}$, there exists a $y \in A_{\infty}$ such that
\[
x = L_{a}\paren{y} = ay.
\]
By \prettyref{lem:Ainf-represents-A}, Choose an $a_{\infty} \in A_{\infty}$ such that
\[
a_{\infty} \sim a.
\]
Since $\mathcal{Q}$ is an idempotent semigroup by \prettyref{thm:finite-bmi-is-idempotent},
\[
a_{\infty}a_{\infty} \sim a_{\infty}.
\]
The map $\beta$ is an identity on $\mathcal{Q}$, so 
\[
Ba_{\infty} \sim a_{\infty}.
\]
Using these equivalences, we obtain
\begin{align*}
ax &= a\paren{ay} \\
&= a \paren{a_{\infty} y} \\
&= a_{\infty} \paren{a_{\infty} y} \\
&= B a_{\infty} a_{\infty} y \\
&= a_{\infty}a_{\infty} y \\
&= a_{\infty} y \\
&= a y \\
&= x.
\end{align*}
Therefore, $x$ is a fixed point of $L_{a}$.
\end{proof}

\subsection{Non-standard fixed-point combinators}

It is well-known that there exists a closed term that is not a fixed-point combinator but indistinguishable from fixed-point combinators using their B\"ohm trees.

\begin{example}[cf.~\citet{Statman_1993}]\label{exa:ns-fpc}
Consider the following closed term:
\[
T := \B\M\paren{\B\paren{\B\M}\B}
\]
For each $n\in\Nat$, let
\[
A_{n} := \paren{\B x}^{n} \paren{\B\M\paren{\B x}}.
\]
Then
\begin{align*}
Tx &= \B\M\paren{\B\paren{\B\M}\B}x \\
&\tow \M\paren{\B\paren{\B\M}\B x} \\
&\tow \M\paren{\B\M\paren{\B x}} = \M A_{0} \\
&\tow A_{0} A_{0} = \B\M\paren{\B x} A_{0} \\
&\tow \M \paren{\B x A_{0}} = \M A_{1}.
\end{align*}
and for every $n \geq 1$,
\begin{align*}
\M A_{n} &\tow A_{n} A_{n} = \paren{\B x}^{n} A_{0} A_{n} \\
&\tow^{n} x^{n} \paren{A_{0} A_{n}} = x^{n} \paren{\B\M\paren{\B x} A_{n}} \\
&\tow x^{n} \paren{\M\paren{\B x A_{n}}} = x^{n} \paren{\M A_{n+1}}.
\end{align*}
Hence, by induction on $n$, we have
\[
Tx \tow^{\ast} x^{1+2+\cdots+n} \paren{\M A_{n+1}}.
\]
\end{example}

\begin{definition}
A closed term $T$ is called a \emph{non-standard fixed-point combinator} if for every $r \in \Nat$, there exists a term $R_{r}$ with variable $x$ such that $Tx \eqw x^{r} R_{r}$.
\end{definition}

The following notion is an algebraic analogue of nonstandard-fixed point combinators:

\begin{definition}
An element $\xi$ of $A$ is called a \emph{stabiliser} if $\xi a \in L_{a}^{\infty}\sqbra{A}$ holds for all $a \in A$.
\end{definition}

\begin{theorem}\label{thm:ns-fpc-is-stabiliser}
Let $\mathcal{A} = \paren{A, \ast, B, M}$ be a $\BM$--algebra and let $T$ be a non-standard fixed-point combinator.
Then the $\mathcal{A}$--interpretation $T^{\mathcal{A}}$ is a stabiliser.
\end{theorem}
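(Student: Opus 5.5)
Fix $a \in A$. The plan is to interpret the weak equalities $Tx \eqw x^{r} R_{r}$ in $\mathcal{A}$ with $x$ assigned to $a$. Note that the theorem does not assume $\mathcal{A}$ is finite. For this reason I read the eventual image as $L_{a}^{\infty}\sqbra{A} = \bigcap_{i} L_{a}^{i}\sqbra{A}$, and membership in it means membership in every $L_{a}^{r}\sqbra{A}$. The claim therefore reduces to showing that $T^{\mathcal{A}} a \in L_{a}^{r}\sqbra{A}$ for every $r \in \Nat$.

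First I would justify the soundness step. Since $\mathcal{A}$ has a $\B$--element and an $\M$--element, there is an interpretation of $\BM$--terms in $\mathcal{A}$ once a value is assigned to each variable. This is the homomorphism from the free $\BM$--algebra $\Terms_{\BM}/{\eqw}$ generated by the variables (Preliminaries) into $\mathcal{A}$, determined by $x \mapsto a$ and sending the other variables to arbitrary elements. Weakly equal terms have equal interpretations, because each rewrite rule $\B uvw \tow u(vw)$ and $\M u \tow uu$ holds as an identity in $\mathcal{A}$, and equality of interpretations is a congruence. Since $T$ is closed, its interpretation is $T^{\mathcal{A}}$, so $Tx$ is interpreted as $T^{\mathcal{A}} a$.

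Next, fix $r$ and take $R_{r}$ with $Tx \eqw x^{r} R_{r}$. Let $\rho$ be the interpretation of $R_{r}$ under the chosen assignment; it is some element of $A$. By induction on $r$, the term $x^{r}R_{r}$ is interpreted as $a^{r}\rho = L_{a}^{r}(\rho)$. Hence $T^{\mathcal{A}}a = L_{a}^{r}(\rho) \in L_{a}^{r}\sqbra{A}$. Since $r$ was arbitrary, $T^{\mathcal{A}}a \in L_{a}^{\infty}\sqbra{A}$, which is exactly the stabiliser condition.

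I do not expect a real obstacle. The only delicate point is the soundness of $\eqw$ under interpretation, including the handling of free variables of $R_{r}$ other than $x$. Assigning those variables arbitrarily resolves this, since $A$ is nonempty.
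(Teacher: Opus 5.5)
Your proposal is correct and follows essentially the same route as the paper: interpret $Tx \eqw x^{r}R_{r}$ in $\mathcal{A}$ under $x \mapsto a$ to get $T^{\mathcal{A}}a = L_{a}^{r}(\rho) \in L_{a}^{r}\sqbra{A}$ for every $r$, and conclude that $T^{\mathcal{A}}a$ lies in the intersection $L_{a}^{\infty}\sqbra{A}$. Your explicit arguments for soundness of $\eqw$ and for assigning values to any extra free variables of $R_{r}$ spell out details that the paper leaves implicit.
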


\begin{proof}
Let $a \in A$.
For each $r\geq 1$, there exists a term $R_{r}$ with variable $x$ such that $Tx \eqw x^{r} R_{r}$.
Applying the substitution $\sqbra{a/x}$ on both sides, we have
\[
T^{\mathcal{A}} a = \sqbra{a/x}\paren{Tx}^{\mathcal{A}} = \sqbra{a/x}\paren{x^{r} R_{r}}^{\mathcal{A}} = L_{a}^{r}\paren{\sqbra{a/x} R_{r}^{\mathcal{A}}}.
\]
Thus
\[
T^{\mathcal{A}} a \in \bigcap_{r \in \Nat} L_{a}^{r}\sqbra{A}. \qedhere
\]
\end{proof}

In the finite case, the eventual image $L_{a}$ coincides with the set of all fixed points.
Thus we obtain the following result:

\begin{theorem}\label{thm:ns-fpc-is-y}
Let $\mathcal{A} = \paren{A, \ast, B, M}$ be a finite $\BM$--algebra and let $T$ be a non-standard fixed-point combinator.
Then the $\mathcal{A}$--interpretation $T^{\mathcal{A}}$ is a $\Y$--element.
\end{theorem}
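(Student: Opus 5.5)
The plan is to combine \prettyref{thm:ns-fpc-is-stabiliser} with \prettyref{cor:eventual-image-fixed-point}. Let $Y := T^{\mathcal{A}}$ and fix $a \in A$. We must show $Ya = a\paren{Ya}$, i.e., that $Ya$ is a fixed point of the left translation $L_{a}$.

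First, by \prettyref{thm:ns-fpc-is-stabiliser} (which holds for arbitrary $\BM$--algebras, so in particular for the finite $\mathcal{A}$), $Y$ is a stabiliser. Hence $Ya \in L_{a}^{\infty}\sqbra{A}$. One small point to check is that the definition of a non-standard fixed-point combinator covers every $r \in \Nat$, including $r = 0$, which is trivial. So the intersection $\bigcap_{r\in\Nat} L_{a}^{r}\sqbra{A}$ is exactly the eventual image.

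Second, since $\mathcal{A}$ is a finite $\BM$--algebra, \prettyref{cor:eventual-image-fixed-point} identifies $L_{a}^{\infty}\sqbra{A}$ with the set of fixed points of $L_{a}$. Therefore $L_{a}\paren{Ya} = Ya$, that is, $a\paren{Ya} = Ya$. Since $a$ was arbitrary, $Y$ is a $\Y$--element.

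There is no real obstacle here, because all the substantive work has been done in \prettyref{cor:eventual-image-fixed-point}, which rests on the quotient $\mathcal{Q}$ being a finite $\BMI$--algebra and hence idempotent. The only care needed is to make sure finiteness is used exactly where the corollary requires it; the stabiliser property itself needs no finiteness. As a final remark, I would note that \prettyref{exa:ns-fpc} supplies a concrete non-standard fixed-point combinator $T$, so a single closed $\BM$--term serves as a $\Y$--element in every finite $\BM$--algebra, as promised in the abstract.
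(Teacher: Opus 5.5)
Your proposal is correct and is exactly the paper's argument: the paper's proof uses Theorem~\ref{thm:ns-fpc-is-stabiliser} to place $T^{\mathcal{A}}a$ in $L_{a}^{\infty}\sqbra{A}$, and then uses Corollary~\ref{cor:eventual-image-fixed-point} to identify that set with the fixed points of $L_{a}$ in a finite $\BM$--algebra. Your side remark that the case $r=0$ is trivial is a harmless extra check.
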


\begin{proof}
Immediate from \prettyref{thm:ns-fpc-is-stabiliser} and \prettyref{cor:eventual-image-fixed-point}.
\end{proof}

This gives an affirmative answer to \prettyref{prob:finite-algebraic-Smullyan-problem}.

\begin{corollary}\label{cor:finite-bm-has-y}
Every finite $\BM$--algebra has a $\Y$--element.
\end{corollary}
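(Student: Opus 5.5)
The corollary should follow from \prettyref{thm:ns-fpc-is-y} once we exhibit a single non-standard fixed-point combinator in the $\BM$--fragment. The candidate is the closed term $T := \B\M\paren{\B\paren{\B\M}\B}$ of \prettyref{exa:ns-fpc}. Given a finite $\BM$--algebra $\mathcal{A}$, we take $Y := T^{\mathcal{A}}$. By \prettyref{thm:ns-fpc-is-y} this is a $\Y$--element, and it is uniform: the same closed term works in every finite $\BM$--algebra, as the abstract claims.

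The main step is to check that $T$ satisfies the definition of a non-standard fixed-point combinator. That is, for every $r \in \Nat$ we need a term $R_{r}$ with $Tx \eqw x^{r} R_{r}$. \prettyref{exa:ns-fpc} gives
\[
Tx \tow^{\ast} x^{1+2+\cdots+n}\paren{\M A_{n+1}}
\]
for every $n$. Given $r$, choose $n$ with $s := n(n+1)/2 \geq r$; for $r = 0$ one may simply take $R_{0} := Tx$. Then set $R_{r} := x^{s-r}\paren{\M A_{n+1}}$. Since $x^{s}u = x^{r}\paren{x^{s-r}u}$ holds syntactically by the recursive definition of $t^{n}u$, we get $Tx \eqw x^{r}R_{r}$.

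It remains to confirm the reduction sequence in \prettyref{exa:ns-fpc}, which is the only place needing care. The initial steps $\B\M\paren{\B\paren{\B\M}\B}x \tow \M\paren{\B\paren{\B\M}\B x} \tow \M\paren{\B\M\paren{\B x}}$ are direct $\B$--contractions. The inductive step uses that $\paren{\B x}^{n} A_{0} A_{n}$ reduces in $n$ steps to $x^{n}\paren{A_{0}A_{n}}$. This holds because $\B x u v \tow x\paren{uv}$, applied repeatedly from the outside in. Finally, $A_{0}A_{n} = \B\M\paren{\B x}A_{n} \tow \M\paren{\B x A_{n}} = \M A_{n+1}$. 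An induction on $n$ then sums the exponents to $1+2+\cdots+n$.

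One small point remains: \prettyref{thm:ns-fpc-is-stabiliser} passes from $\eqw$ in the term model to equality in $\mathcal{A}$ under the substitution $\sqbra{a/x}$. This is soundness of the $\BM$--axioms in any $\BM$--algebra, which is standard: each $\tow$ step is an instance of a defining identity. Apart from this, the proof is immediate.
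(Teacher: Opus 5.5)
Your proposal is correct and is essentially the paper's argument: the corollary follows by applying Theorem~\ref{thm:ns-fpc-is-y} to the term $T = \B\M\paren{\B\paren{\B\M}\B}$ of Example~\ref{exa:ns-fpc}, the same $T$ the paper uses in its subsequent variety corollary. Your choice $R_{r} := x^{s-r}\paren{\M A_{n+1}}$ with $s = n(n+1)/2 \geq r$ spells out a step the paper leaves implicit, namely that the reduction $Tx \tow^{\ast} x^{1+2+\cdots+n}\paren{\M A_{n+1}}$ makes $T$ a non-standard fixed-point combinator.
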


Our proof also demonstrates that an infinite-valued invariant is required to distinguish non-standard fixed-point combinators from true fixed-point combinators.

\begin{corollary}
The variety of $\BM$--algebras is not generated by the class of finite $\BM$--algebras.
Here, `generated' means obtained by applying the operations of taking direct products, subalgebras, and homomorphic images.
\end{corollary}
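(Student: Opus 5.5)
The plan is to argue by contradiction. Suppose the variety $\mathbf{V}$ of $\BM$--algebras were generated by its finite members. I will show that the identity $Tx = x\paren{Tx}$ holds in every finite $\BM$--algebra, where $T := \B\M\paren{\B\paren{\B\M}\B}$ is the term of \prettyref{exa:ns-fpc}. I will then observe that identities are preserved under $H$, $S$ and $P$. Hence the identity would hold in every $\BM$--algebra, in particular in a $\BM$--algebra without a $\Y$--element, which gives the contradiction.

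\textbf{Step 1: $T$ is a non-standard fixed-point combinator.} By \prettyref{exa:ns-fpc}, $Tx \tow^{\ast} x^{1+2+\cdots+n}\paren{\M A_{n+1}}$ for every $n$. Given $r$, choose $n$ with $1+\cdots+n \geq r$ and put
\[
R_{r} := x^{1+\cdots+n-r}\paren{\M A_{n+1}}.
\]
Then $Tx \eqw x^{r}R_{r}$. By \prettyref{thm:ns-fpc-is-y}, $T^{\mathcal{A}}$ is a $\Y$--element in every finite $\BM$--algebra $\mathcal{A}$. Thus the equation $Tx = x\paren{Tx}$, read in the signature of magmas with constants $B$ and $M$, holds in all finite $\BM$--algebras.

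\textbf{Step 2: preservation under $H$, $S$, $P$.} Equations between terms in the signature $\paren{\ast, B, M}$ are preserved by homomorphic images, subalgebras and products. This is Birkhoff's standard fact, and it applies because homomorphisms preserve the distinguished constants. So every algebra in the class generated by the finite $\BM$--algebras satisfies $Tx = x\paren{Tx}$, and in that algebra $T$ interprets as a $\Y$--element.

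\textbf{Step 3: a $\BM$--algebra without a $\Y$--element.} \prettyref{thm:infinite-bmi-wo-y} gives $\Alg_{\BMI}$. Forgetting $\I$ makes it a $\BM$--algebra, and it has no $\Y$--element, so $T^{\Alg_{\BMI}}$ is not one. This would already finish the argument. Alternatively, I can use the free $\BM$--algebra $\Terms_{\BM}/{\eqw}$: there $Tx \neqw x\paren{Tx}$ by \prettyref{fact:no-periodic-point-combinator}, since $\BM$--terms are $\BMI$--terms and $\eqw$ in the $\BM$--fragment implies $\eqw$ in the $\BMI$--fragment.

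The only point needing care is that the variety is taken in the signature including the constants $B$ and $M$. If the constants were existentially quantified instead, I would rephrase the argument via the first-order sentence $\exists B\,\exists M\,\paren{\ldots}$. I expect Step 1, checking that the example term genuinely meets the definition for all $r$, to be the only mildly delicate step, and it is handled by the monotone triangular exponent.
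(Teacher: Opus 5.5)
Your proposal is correct and follows the paper's proof. The identity $Tx = x\paren{Tx}$ for $T = \B\M\paren{\B\paren{\B\M}\B}$ holds in every finite $\BM$--algebra by Theorem~\ref{thm:ns-fpc-is-y}, fails in $\Alg_{\BMI}$ by Theorem~\ref{thm:infinite-bmi-wo-y}, and preservation of identities under $H$, $S$, $P$ (Birkhoff) gives the conclusion; your Step~1 usefully makes explicit that $T$ meets the definition of a non-standard fixed-point combinator (left implicit in Example~\ref{exa:ns-fpc}), and your alternative witness, the free $\BM$--algebra via Fact~\ref{fact:no-periodic-point-combinator}, is also valid.
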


\begin{proof}
Let $T := \B\M\paren{\B\paren{\B\M}\B}$.
The equation
\[
T x = x \paren{T x}
\]
holds for all finite $\BM$--algebras (\prettyref{thm:ns-fpc-is-y}), whereas it does not hold for certain infinite $\BM$--algebras (\prettyref{thm:infinite-bmi-wo-y}).
By Birkhoff's variety theorem (\citet{Birkhoff_1935}), the class of $\BM$--algebras is not generated by the class of finite $\BM$--algebras.
\end{proof}

\section{Related open problems}\label{sec:open-problems}

We conclude the paper by presenting several open problems related to \prettyref{prob:Smullyan-problem} and \prettyref{prob:algebraic-Smullyan-problem}.

\subsection{Permutative effects}

Existing research on Smullyan's problem has focused on the role of \emph{permutative effects} (\citet{MW_1991,Statman_2011}).
For a combinator $\mathbf{X}$ with a reduction rule
\[
\mathbf{X} x_{1} \cdots x_{n} \tow t,
\]
a `permutative effect' refers to a case where, for some $i < j$, an occurrence of $x_{j}$ precedes an occurrence of $x_{j}$ in the right-hand side $t$.
For example, the $\C$--combinator,
\[
\C xyz \tow xzy,
\]
has a permutative effect on $y$ and $z$, whereas $\B$, $\M$ and $\I$ do not.

The absence of permutative effects in the reduction rules, however, does not imply the absence of a fixed-point combinator.
To see this, consider the $\W$--combinator, whose reduction rule is
\[
\W xy \tow xyy.
\]
Like $\B$, $\M$ and $\I$, the $\W$--combinator has no permutative effect.
Nevertheless, the $\BWI$--fragment has a fixed-point combinator.
Indeed, Curry's fixed-point combinator
\[
Y := \lambda x. \paren{\lambda y. x\paren{yy}} \paren{\lambda y. x\paren{yy}}
\]
can be represented in the $\BWI$--fragment as follows:
\begin{align*}
Y &= \lambda x. \paren{\lambda y. x\paren{yy}} \paren{\lambda y. x\paren{yy}} \\
&\rightsquigarrow \lambda x. \paren{\lambda y. \B xyy} \paren{\lambda y. \B xyy} \\
&\rightsquigarrow \lambda x. \paren{\B\W\B x} \paren{\B\W\B x} \\
&\rightsquigarrow \lambda x. \paren{\W\I} \paren{\B\W\B x} \\
&\rightsquigarrow \B \paren{\W\I} \paren{\B\W\B} =: Y_{\BWI}.
\end{align*}
The resulting $Y_{\BWI}$ peforms as intended:
\begin{align*}
Y_{\BWI} x &\tow \paren{\W\I} \paren{\B\W\B x} \\
&\tow \I \paren{\B\W\B x} \paren{\B\W\B x} \\
&\tow \paren{\B\W\B x}\paren{\B\W\B x} \\
&\tow \W\paren{\B x}\paren{\B\W\B x} \\
&\tow \paren{\B x}\paren{\B\W\B x}\paren{\B\W\B x} \\
&\tow x \paren{\paren{\B\W\B x} \paren{\B\W\B x}} \eqw x \paren{Y_{\BWI} x}.
\end{align*}
Note that this fixed-point combinator does \emph{not} satisfy $Y_{\BWI} x \tow^{\ast} x\paren{Y_{\BWI}x}$.

Thus, the $\BWI$--fragment has a fixed-point combinator, whereas, as we have seen, the $\BMI$--fragment does not.
None of these fragments possesses permutative effects.
Therefore, the absence of a permutative effect is not the root cause for the absence of a fixed-point combinator within the $\BMI$--fragment.
This comparison leads to the following question:

\begin{problem}
What properties of the reduction rules determine whether or not a fixed-point combinator exists?
\end{problem}

More specifically, what differences between the reduction rules $\M x \tow xx$ and $\W xy \tow xyy$ account for the different answers in these two fragments?
Consider a reduction rule of the form $\mathbf{X} x_{1} \cdots x_{n} \tow t$, where $t$ has no constant symbols.
The term on the right-hand side can be viewed as a labelled binary tree with the natural numbers $1, \ldots, n$ as its leaves (see \prettyref{fig:trees-of-M-and-W} for $\M$ and $\W$).
Is it possible to describe the root cause for the absence of a fixed-point combinator in terms of this tree?
These questions are not yet properly formulated and should be understood as \emph{koans} in the sense used by \citet{Barendregt_2020}.

\begin{figure}
\begin{minipage}[b]{0.45\linewidth}
    \centering
    \begin{tikzpicture}[
        baseline=(root.base),
        level distance=7mm,
        sibling distance=12mm,
        every node/.style={inner sep=1pt}
    ]
    \node (root) {$\bullet$}
        child {node {$1$}}
        child {node {$1$}};
    \end{tikzpicture}
    \subcaption{$\M x_{1} \tow x_{1}x_{1}$.}
\end{minipage}
\begin{minipage}[b]{0.45\linewidth}
    \centering
    \begin{tikzpicture}[
        baseline=(root.base),
        level distance=7mm,
        sibling distance=12mm,
        every node/.style={inner sep=1pt}
    ]
    \node (root) {$\bullet$}
        child {node {$\bullet$}
            child {node {$1$}}
            child {node {$2$}}
        }
        child {node {$2$}};
    \end{tikzpicture}
    \subcaption{$\W x_{1}x_{2} \tow x_{1}x_{2}x_{2}$.}
\end{minipage}

\caption{Labelled binary trees for $\M$ and $\W$.\label{fig:trees-of-M-and-W}}
\end{figure}

\subsection{Turing-type fixed-point combinators}

Consider Turing's fixed-point combinator in lambda calculus:
\[
\Theta := \theta\theta,\qquad \text{where}\ \theta := \lambda x. \lambda y.y\paren{xxy}.
\]
This closed $\lambda$--term possesses the following stronger fixed-point property:
\[
\Theta x = \theta\theta x \to_{\beta}^{\ast} x\paren{\theta\theta x} = x\paren{\Theta x}.
\]
The $\lambda$--abstraction of this $\lambda$--term can be eliminated in full combinatory logic as follows:
\begin{align*}
\theta &= \lambda x. \lambda y.y\paren{xxy} \\
&\rightsquigarrow \lambda x. \S\I\paren{xx} \\
&\rightsquigarrow \lambda x. \S\I\paren{\W\I x} \\
&\rightsquigarrow \B\paren{\S\I}\paren{\W\I}, \\
\Theta &\rightsquigarrow \B\paren{\S\I}\paren{\W\I} \paren{\B\paren{\S\I}\paren{\W\I}}.
\end{align*}
The construction above uses the $\S$--combinator, whose reduction rule
\[
\S xyz \tow xz\paren{yz}.
\]
has a permutative effect on $y$ and $z$.
As previously mentioned, the $\BWI$--fragment does not have such a permutative effect.
We therefore ask whether this fragment admits
a Turing-type fixed-point combinator.

\begin{problem}\label{prob:Turing-type-fpc-in-BWI}
Is there a closed term $\Theta$ in the $\BWI$--fragment such that $\Theta x \tow^{\ast} x\paren{\Theta x}$ for a fresh variable symbol $x$?
\end{problem}

Note that the $\BMI$--version of this problem has been answered negatively by \citet{MW_1991,Statman_1993} prior to the solution to \prettyref{prob:Smullyan-problem}.

The preceding discussion raises a more general question about the existence of Turing-type fixed-point combinators.
As the $\BWI$--fragment shows, the absence of permutative effects does not imply the absence of a fixed-point combinator.
This observation, however, does not settle the corresponding question for the Turing-type fixed-point combinators.

\begin{problem}
What properties of the reduction rules determine whether or not a Turing-type fixed-point combinator exists?
In particular, does the absence of permutative effects imply the absence of such a combinator?
\end{problem}

One can also consider the (partially ordered) algebraic version of \prettyref{prob:Turing-type-fpc-in-BWI}.

\begin{definition}
Let $\mathcal{A} = \paren{A, \ast, \leq}$ be a partially ordered magma, i.e. a magma $\paren{A, \ast}$ with a partial order $\leq$ such that the map $\ast \colon A\times A\to A$ is monotonic.
An element $\xi$ of $A$ is called
\begin{enumerate}
\item a \emph{$\B$--element} if $\xi xyz \leq x\paren{yz}$ for all $x,y,z\in A$.
\item an \emph{$\W$--element} if $\xi x y \leq x y y$ for all $x\in A$.
\item an \emph{$\I$--element} if $\xi x \leq x$ for all $x \in A$.
\item a \emph{$\Theta$--element} if $\xi x \leq x\paren{\xi x}$ for all $x \in A$.
\end{enumerate}
A partially ordered magma eqipped with $B$--, $W$-- and $\I$--elements is called a \emph{partially ordered $\BWI$--algebra}.
\end{definition}

\begin{problem}[An algebraic version of \prettyref{prob:Turing-type-fpc-in-BWI}]
Does every partially ordered $\BWI$--algebra have a $\Theta$--element?
\end{problem}

\bibliographystyle{plainnat}
\bibliography{refs}

\appendix

\section{Code availability}\label{app:code-availability}

The source code for reproducing \prettyref{obs:sat-observation} is available at \url{https://github.com/hyperfinitism/magma-search} (commit ID: \texttt{5c18b05}).

\texttt{magma-search} is a command-line tool for searching for finite magmas that satisfy specified conditions.
As a condition, one can specify whether or not there are elements that satisfy a given set of equations.
This tool provides two search methods.
One performs an exhaustive search to enumerate all isomorphism classes of magmas that satisfy the given conditions.
The other uses a parallel SAT solver (MallobSat~\citep{SS_2024,SRS_2026} via RustSAT~\citep{Jabs_2025}) to determine whether such a magma exists and, if so, to output an example.

It should be noted that the results presented in this paper do not depend on any computational experiments.
For example, \prettyref{obs:sat-observation} is a special case of \prettyref{cor:finite-bm-has-y}, which was proved independently of computational experiments.

\end{document}